\DeclareMathOperator{\Rea}{Re}
\DeclareMathOperator{\Ima}{Im}
\renewcommand{\leq}{\leqslant}
\renewcommand{\geq}{\geqslant}
\newtheorem{proposition}{Proposition}
\newtheorem{lemma}{Lemma}
\begin{document}
\large
\begin{center}
{\LARGE{\bf\sf A splitting higher order scheme}}
\end{center}
\begin{center}
{\LARGE{\bf\sf with discrete transparent boundary conditions}}
\end{center}
\begin{center}
{\LARGE{\bf\sf for the Schr\"odinger equation in a semi-infinite parallelepiped}}
\vskip 0.5cm
{\large Bernard Ducomet
\footnote[1]{DPTA/Service de Physique Nucl\'eaire, CEA/DAM/DIF Ile de France, BP 12, F--91297, Arpajon, France.
E-mail: \it{bernard.ducomet@cea.fr}},
{\large Alexander Zlotnik}
\footnote[2]{Department of Higher Mathematics at Faculty of Economics,
National Research University Higher School of Economics,
Myasnitskaya 20, 101000 Moscow, Russia.}
\footnote[3]{Department of Mathematical Modelling,
National Research University Moscow Power Engineering Institute,
Krasnokazarmennaya 14, 111250 Moscow, Russia. E-mail: \it{azlotnik2008@gmail.com}}
{\large and Alla Romanova}
\footnote[4]
{Department of Higher Mathematics at Faculty of Economics,
National Research University Higher School of Economics,
Myasnitskaya 20, 101000 Moscow, Russia.
E-mail: \it{avromm1@gmail.com}}}
\end{center}
\vskip 0.5cm
\begin{abstract}
\noindent An initial-boundary value problem for the $n$-dimensional ($n\geq 2$) time-dependent Schr\"odinger equation in a semi-infinite (or infinite) parallelepiped is considered.
Starting from the Numerov-Crank-Nicolson finite-difference scheme, we first construct higher order scheme with splitting space averages having much better spectral properties for $n\geq 3$.
Next we apply the Strang-type splitting with respect to the potential and, third, construct discrete transparent boundary conditions (TBC).
For the resulting method, the uniqueness of solution and the unconditional uniform in time $L^2$-stability (in particular, $L^2$-conservativeness) are proved.
Owing to the splitting, an effective direct algorithm using FFT (in the coordinate directions perpendicular to the leading axis of the parallelepiped) is applicable for general potential.
Numerical results on the 2D tunnel effect for a P\"{o}schl-Teller-like potential-barrier and a rectangular potential-well are also included.
\end{abstract}
\par MSC[2010] classification: 65M06, 65M12, 35Q40.
\par Keywords:
the time-dependent Schr\"odinger equation,
the Crank-Nicolson finite-diffe\-rence scheme,
higher-order scheme,
the Strang splitting,
discrete transparent boundary conditions,
stability, tunnel effect
\section{Introduction}
\label{sect1}
\large
The time-dependent Schr\"odinger equation with several space variables is crucial in quantum mechanics and electronics, nuclear and atomic physics, wave physics, etc. Often it should be solved in unbounded space domains.
\par Several approaches were developed and investigated for solving problems of such kind, in particular, see \cite{AABES08,ABM04,AES03,DiM97,Sch02a,SZ04}.
One of them exploits the so-called discrete transparent boundary conditions (TBCs) at artificial boundaries \cite{AES03,EA01}.
Its advantages are the complete absence of spurious reflections in practice as well as the rigorous mathematical background and stability results in theory.
\par The discrete TBCs for the Crank-Nicolson finite-difference scheme, the higher order Numerov-Crank-Nicolson scheme
and a general family of schemes on an infinite or semi-infinite strip were constructed and studied respectively in \cite{AES03,DZ06,DZ07}, \cite{SA08} and \cite{ZZ11,IZ11}.
All these schemes are implicit, so to implement them, solving of specific complex systems of linear algebraic equations is required at each time level.
\par The splitting technique is widely used to simplify numerical solving of the time-dependent Schr\"o\-dinger and related equations, in particular, see \cite{BM00,CMR13,GX11,G11,L08,TY10}.
The known Strang-type splitting with respect to the potential has been recently applied to the Crank-Nicolson and the Numerov-Crank-Nicolson scheme with the discrete TBCs in 2D case in \cite{DZZ13,ZR13}.
\par Higher order methods are important due to their ability to reduce computational costs essentially, and the Numerov-Crank-Nicolson scheme can be written in $n$-dimensional case as well. But we show that, for $n\geq 3$, the Numerov space operators lose their important spectral properties existing for $n=2$ so that the scheme becomes impractical.
\par In this paper, in the spirit of \cite{ZZ11,IZ11}, we first split these operators (in space) and recover the properties without reducing the higher order, for any $n\geq 2$. We second apply the Strang-type splitting in potential in time also conserving the higher order; the resulting scheme can be called ``double-(space-time)-splitting". For this scheme on an infinite space mesh in the semi-infinite parallelepiped, we prove the unconditional uniform in time $L^2$-stability together with the mass conservation law using combination of techniques from \cite{DZZ09,DZZ13,SA08}.
\par The discrete TBCs allow to restrict rigorously solutions of the schemes on infinite space meshes to finite ones; they can be written in several forms. Our form of the discrete TBC and its derivation for the ``double-splitting" scheme follow \cite{DZ06,DZZ09,ZZ11,IZ11}. This form simplifies the whole study and is computationally stable.
Notice that the discrete TBC is non-local and involves the discrete convolution in time together with the discrete Fourier operators in space directions perpendicular to the leading axis of the parallelepiped.
Exploiting an approach from \cite{ZR13} and suitable results from \cite{DZZ09}, we derive the uniqueness of solution to the ``double-splitting" scheme with the discrete TBC
and then its uniform in time $L^2$-stability (from the former stability result for the infinite space mesh). In particular, it is $L^2$-conservative.
\par Owing to the Strang-type splitting, an effective direct algorithm is considered to implement the method (for general potential) similar to those constructed in \cite{DZZ13,ZR13}. It uses the fast Fourier transform (FFT) in the perpendicular directions and a collection of independent 1D discrete Schr\"odinger problems at each time level.
\par The corresponding 2D numerical results on the tunnel effect for a P\"{o}schl-Teller-like potential-barrier and a rectangular potential-well are included.
In the case of the potential-barrier, we compare the ``double-splitting" scheme with the Numerov-Crank-Nicolson-Strang scheme from \cite{ZR13} and find that their errors are very close.
In the case the rectangular well, we present the behavior of the solution.
In both cases we check that rather rough space meshes can be actually used (due to the higher order in space) despite the large space derivatives or non-smoothness of the solution.

\section{The Schr\"odinger equation in a semi-infinite parallelepiped
 and its approximations of higher order in space}
\label{sect4}
\setcounter{equation}{0}
\setcounter{proposition}{0}
\setcounter{theorem}{0}
\setcounter{lemma}{0}
\setcounter{corollary}{0}
\setcounter{remark}{0}
We consider the multidimensional time-dependent Schr\"odinger equation
\begin{equation}
 i\hbar\frac{\partial\psi}{\partial t}
 =-\frac{\hbar^{\,2}}{2m_0}\,\Delta \psi+V\psi\ \
 \text{for}\ \ x=(x_1,\dots,x_n)\in\Pi_{\infty},\ \ t>0,
\label{a1}
\end{equation}
where $\Delta$ is the $n$-dimensional Laplace operator for $n\geq 2$,
$\Pi_{\infty}:=(0,\infty)\times \Pi_{\widehat{1}}$ is a semi-infinite parallelepiped, with $\Pi_{\widehat{1}}:=(0,X_2)\times\dots\times (0,X_n)$.
Hereafter $i$ is the imaginary unit, $\hbar>0$ and $m_0>0$ are physical constants, $\psi=\psi(x,t)$ is the unknown complex-valued wave function and $V(x)$ is a given real potential.
We also set $c_{\hbar}:=\frac{\hbar^{\,2}}{2m_0}$ for convenience.
\par We impose the following boundary condition, condition at infinity and initial condition
\begin{gather}
 \psi(\cdot,t)|_{\partial\Pi_{\infty}}=0,\ \
 \|\psi(x_1,\cdot,t)\|_{L^2(\Pi_{\widehat{1}})}\to 0\ \
 \text{as}\ \ x_1\to\infty,
 \ \ \text{for any}\ \ t>0,
\label{a2}\\[1mm]
 \psi|_{t=0}=\psi^0(x)\ \ \text{in}\ \ \Pi_{\infty}.
 \label{a3}
\end{gather}
We also assume that $V(x)$ is constant and $\psi_0(x)$ vanishes when $x_1$ is sufficiently large:
\begin{equation}
 V(x)=V_{\infty},\ \
 \psi^0(x)=0\ \ \text{for}\ \ x\in [X_0,\infty)\times\Pi_{\widehat{1}},
\label{a4}
\end{equation}
 for some $X_0>0$.
\par We introduce a uniform mesh $\overline{\omega}_{h,\infty}$ on $\overline{\Pi}_{\infty}$ with nodes
$x_{{\bf j}}=(j_1h_1,\dots,j_nh_n)$, $j_1\geq 0$,
$0\leq j_2\leq J_2,\dots,\ 0\leq j_n\leq J_n$
and steps $h_1=\frac{X_1}{J_1},\dots, h_n=\frac{X_n}{J_n}$, where $X_1>X_0$ and $2h_1\leq X_1-X_0$.
Let $\omega_{h,\infty}$ be its internal part consisting in the nodes
$x_{{\bf j}}$, $j_1\geq 1$, $1\leq j_2\leq J_2-1,\dots,1\leq j_n\leq J_n-1$ and let
$\Gamma_{h,\infty}:=\overline{\omega}_{h,\infty}\backslash\omega_{h,\infty}$ be its boundary. Hereafter $h=(h_1,\dots, h_n)$, $|h|$ is the length of $h$ and ${\bf j}=(j_1,\dots, j_n)$.
\par In the direction $x_k$, we exploit the backward, forward and central difference quotients
\[
 \bar{\partial}_kW_j:= \frac{W_j-W_{j-1}}{h_k},\ \
 \partial_kW_j:= \frac{W_{j+1}-W_{j}}{h_k},\ \
 \overset{\circ}{\partial}_k W_j:= \frac{W_{j+1}-W_{j-1}}{2h_k}
\]
as well as the Numerov average in $x_k$
\[
s_{Nk}W_j:= \frac{1}{12}\, W_{j-1}+\frac{5}{6}\, W_{j}+\frac{1}{12}\, W_{j+1}
=\Bigl(I+\frac{h_k^2}{12}\, \partial_k\bar{\partial}_k\Bigr)W_j,
\]
where $I$ is the unit operator.
\par We introduce also a non-uniform mesh $\overline{\omega}^{\,\tau}$ in time on $[0,\infty)$ with nodes
$0=t_0< t_1<\dots< t_m<\dots$, where $t_m\to\infty$ as $m\to\infty$, and steps $\tau_m=t_m-t_{m-1}$.
Let $\omega^{\tau}:=\overline{\omega}^{\,\tau}\backslash \{0\}$ and $\tau_{\max}=\sup_{m\geq 1} \tau_m$.
We exploit the backward difference quotient, the symmetric average and the backward shift in time
\[
 \bar{\partial}_t Y^m= \frac{Y^m-Y^{m-1}}{\tau_m},\ \
 \overline{s}_t Y^m= \frac{Y^{m-1}+Y^m}{2},\ \
 {\check Y}^m=Y^{m-1}.
\]
\par The simplest approximations of the Laplace operator in dimensions $n$ and $n-1$ (excluding $x_k$) are
\[
\Delta_h=\partial_1\bar{\partial}_1+\dots+\partial_n\bar{\partial}_n,
\ \ \Delta_{h,\widehat{k}}=\sum_{1\leq\ell\leq n,\, \ell\neq k}\partial_{\ell}\bar{\partial}_{\ell}.
\]
The Numerov-type approximation of the Laplace operator and the $n$-dimensional average are given by
\[
\Delta_{hN}=\Delta_h+\sum_{k=1}^n\frac{h_k^2}{12}\,\Delta_{h,\widehat{k}}\, \partial_k\bar{\partial}_k,
\ \ s_N=I+\frac{h_1^2}{12}\,\partial_1\bar{\partial}_1+
\dots+\frac{h_n^2}{12}\,\partial_n\bar{\partial}_n,
\]
and the following formula holds
\begin{equation}
\Delta_{hN}=\sum_{\ell=1}^n \Bigl(\partial_{\ell}\bar{\partial}_{\ell}
+\sum_{1\leq k\leq n,\, k\neq\ell}\frac{h_k^2}{12}\, \partial_{\ell}\bar{\partial}_{\ell}\partial_k\bar{\partial}_k
\Bigr)
=\sum_{\ell=1}^n s_{N\widehat{\ell}}\,\partial_{\ell}\bar{\partial}_{\ell},
\label{c1}
\end{equation}
with ${s_{N\widehat{k}}=I+\sum_{1\leq\ell\leq n,\,\ell\neq k}\frac{h_{\ell}^2}{12}\,\partial_{\ell}\bar{\partial}_{\ell}}$.
\par
We begin with an approximation for the Schr\"odinger equation (\ref{a1}) of the Numerov type in space and of the Crank-Nicolson type (i.e. two-level symmetric) in time given by
\begin{equation}
i\hbar s_N\bar{\partial}_t \Psi
=-c_{\hbar}\Delta_{hN}\overline{s}_t \Psi
+s_N\left( V\overline{s}_t \Psi\right)
\ \ \mbox{on}\ \ \omega_{h,\infty}\times\omega^{\tau}.
\label{c2}
\end{equation}
The corresponding approximation error
\[
i\hbar s_N\bar{\partial}_t \psi
+c_{\hbar}\Delta_{hN}\overline{s}_t \psi
-s_N\left( V\overline{s}_t \psi\right)
=O\left(\tau_{\max}^2+|h|^4\right)
\]
is of higher 4th order in $|h|$ and 2nd order in $\tau_{\max}$, for $\psi$ smooth enough.
\par Let us first check some important properties of $\Delta_{hN}$ and $s_N$.
For $1\leq p_1\leq J_1-1,\dots,1\leq p_n\leq J_n-1$, we set
\[
s^{({\bf p})}(x)=
\sin \frac{\pi p_1x_1}{X_1}\dots\sin \frac{\pi p_nx_n}{X_n},\ \ \text{with}\ \ {\bf p}:=(p_1,\dots,p_n).
\]
We have
\[
s_Ns^{({\bf p})}
=\lambda_{\bf p}[s_N]s^{({\bf p})},
\ \ -\Delta_{hN}s^{({\bf p})}
=\lambda_{\bf p}[-\Delta_{hN}]s^{({\bf p})},
\ \ -\Delta_{h}s^{({\bf p})}
=\lambda_{\bf p}[-\Delta_{h}]s^{({\bf p})}
\ \ \mbox{on}\ \omega_{h,\infty},
\]
with the eigenvalues
\begin{gather*}
\lambda_{\bf p}[s_N]
=1-\frac{1}{3}
\left(\sin^2 \frac{\pi p_1x_1}{2X_1}+\dots+\sin^2 \frac{\pi p_nx_n}{2X_n}\right),
\\[1mm]
\lambda_{\bf p}[-\Delta_{hN}]
=\lambda_{\bf p}[s_{N\widehat{1}}]\lambda_{p_1}^{(1)}
+\dots
+\lambda_{\bf p}[s_{N\widehat{n}}]\lambda_{p_n}^{(n)},\ \
\lambda_{\bf p}[-\Delta_{h}]
=\lambda_{p_1}^{(1)}
+\dots
+\lambda_{p_n}^{(n)},
\end{gather*}
where $\lambda_p^{(k)}=\Bigl( \frac{2}{h_k}\sin \frac{\pi p h_k}{2X_k}\Bigr)^2$ and $\lambda_{\bf p}[s_{N\widehat{k}}]$ is the $(n-1)$-dimensional version of $\lambda_{\bf p}[s_{N}]$ for $1\leq k\leq n$.
\par Clearly $\lambda_{\bf p}[s_{N}]\leq 1$ and $\lambda_{\bf p}[-\Delta_{hN}]\leq \lambda_{\bf p}[-\Delta_{h}]$ for any $n$.
\par If $n=2$, then
$\frac{1}{3}\leq \lambda_{\bf p}[s_{N}]$
and
$\frac{2}{3}\,\lambda_{\bf p}[-\Delta_{h}]\leq\lambda_{\bf p}[-\Delta_{hN}]$.
\par If $n=3$, then
$0< \lambda_{\bf p}[s_{N}]$
and
$\frac{1}{3}\,\lambda_{\bf p}[-\Delta_{h}]\leq\lambda_{\bf p}[-\Delta_{hN}]$ for any ${\bf p}$, but
$\lambda_{\bf p}[s_{N}]=O\left(|h|^2\right)$ for ${\bf p}=(J_1-1,\dots,J_n-1)$, i.e. $s_N$ is almost degenerate.

If $n\geq 4$, then even $\lambda_{\bf p}[s_{N}]<0$, in particular, for the same ${\bf p}=(J_1-1,\dots,J_n-1)$ and
sufficiently small $|h|$, and thus there exists no $c>0$ such that an inequality
$c\lambda_{\bf p}[-\Delta_{h}]\leq\lambda_{\bf p}[-\Delta_{hN}]$ holds uniformly in ${\bf p}$ and $h$.
Therefore the properties of $s_N$ for $n\geq 3$ and those of $\Delta_N$ for $n\geq 4$ are not natural, in contrast with the case $n=2$.
\par In order to construct operators with better properties, we suggest now to split the operators $s_{N\widehat{k}}$ in (\ref{c1}) and $s_N$ and to introduce
\[
\bar{\Delta}_{hN}
=\overline{s}_{N\widehat{1}}\,\partial_1\bar{\partial}_1
+\dots
+\overline{s}_{N\widehat{n}}\,\partial_n\bar{\partial}_n,
\ \ \mbox{with}\ \ \overline{s}_{N\widehat{k}}=\prod_{1\leq \ell\leq n,\, \ell\neq k} s_{N\ell},
\ \ \mbox{and}\ \ \overline{s}_N=s_{N1}\dots s_{Nn}.
\]
(Note that $\bar{\Delta}_{hN}=\Delta_{hN}$ for $n=2$.)
\par Then we pass from the discrete equation (\ref{c2}) to the following one
\begin{equation}
i\hbar \overline{s}_N\bar{\partial}_t \Psi
=-c_{\hbar}\bar{\Delta}_{hN}\overline{s}_t \Psi
+\overline{s}_N\left( V\overline{s}_t \Psi\right)
\ \ \mbox{on}\ \ \omega_{h,\infty}\times\omega^{\tau}
\label{f1}
\end{equation}
with the splitting average operators. Clearly $\overline{s}_N=s_N+O\left(|h|^4\right)$, and due to formula (\ref{c1}) one has also $\bar{\Delta}_{hN}=\Delta_{hN}+O\left(|h|^4\right)$.
Consequently the approximation error for the discrete Schr\"odinger equation (\ref{f1}) is of the same order $O\left(\tau_{\max}^2+|h|^4\right)$.
Notice that, for $n=2$ and constant $V$, discrete equation (\ref{f1}) is a particular case of one studied in \cite{ZZ11,IZ11}.
\par On the other hand, now one checks immediately that
\[
\overline{s}_N s^{({\bf p})}
=\lambda_{\bf p}[\overline{s}_N]s^{({\bf p})},
\ \ -\bar{\Delta}_{hN}s^{({\bf p})}
=\lambda_{\bf p}[-\bar{\Delta}_{hN}]s^{({\bf p})}\ \ \mbox{on}\ \ \omega_{h,\infty},
\]
with the eigenvalues satisfying
\[
\left(\frac{2}{3}\right)^{n}\leq \lambda_{\bf p}[\overline{s}_N]\leq 1,
\ \ \left(\frac{2}{3}\right)^{n-1}\lambda_{\bf p}[-\Delta_{h}]
\leq \lambda_{\bf p}[-\bar{\Delta}_{hN}]
\leq\lambda_{\bf p}[-\Delta_{h}]\ \ \text{for any}\ \ n.
\]
\par We supplement the discrete equation (\ref{f1}) with the boundary and initial conditions
\begin{equation}
\left. \Psi\right|_{\Gamma_{h,\infty}\times\omega^{\tau}}=0,
\ \ \Psi^0=\Psi^0_h\ \ \mbox{on}\ \ \overline{\omega}_{h,\infty}.
\label{f2}
\end{equation}
Hereafter the compatibility condition $\left. \Psi^0_h\right|_{\Gamma_{h,\infty}}=0$ is assumed.
\par
We further apply the known Strang-type splitting in the potential to the new scheme (\ref{f1}), (\ref{f2}) and get the following three-step scheme
\begin{gather}
 i\hbar\,\frac{\breve{\Psi}^m-\Psi^{m-1}}{\tau_m/2}
 =\Delta V\frac{\breve{\Psi}^m+\Psi^{m-1}}{2}\ \
 \text{on}\ \ \omega_{h,\infty},
\label{e1}\\[1mm]
 i\hbar\overline{s}_N\frac{\widetilde{\Psi}^m-\breve{\Psi}^m}{\tau_m}
 =-c_{\hbar}\bar{\Delta}_{hN}\frac{\widetilde{\Psi}^m+\breve{\Psi}^m}{2}
 +\overline{s}_N\Bigl(\widetilde{V}\frac{\widetilde{\Psi}^m+\breve{\Psi}^m}{2}\Bigr)+F^m
\ \ \text{on}\ \ \omega_{h,\infty},
\label{e2}\\[1mm]
 i\hbar\,\frac{\Psi^m-\widetilde{\Psi}^m}{\tau_m/2}
 =\Delta V\frac{\Psi^m+\widetilde{\Psi}^m}{2}\ \
\ \ \text{on}\ \ \omega_{h,\infty},
\label{e3}
\end{gather}
with the boundary and initial conditions
\begin{gather}
 \breve{\Psi}^m|_{\Gamma_{h,\infty}}=0,\ \ \widetilde{\Psi}^m|_{\Gamma_{h,\infty}}=0,\ \
 \Psi^m|_{\Gamma_{h,\infty}}=0,
\label{e4}\\[1mm]
 \Psi^0=\Psi^0_{h}\ \ \text{on}\ \ \overline{\omega}_{h,\infty},
\label{e5}
\end{gather}
for any $m\geq 1$, where $\Delta V:=V-\widetilde{V}$ and the auxiliary 1D potential $\widetilde{V}=\widetilde{V}(x_1)$ satisfies $\widetilde{V}(x_1)=V_\infty$ for $x_1\geq X_0$. In the simplest case, $\widetilde{V}(x)=V_\infty$ (but a non-constant $\widetilde{V}$ is necessary when extending the results to the case of an infinite parallelepiped with different limit values $V_{\pm\infty}$ of $V(x)$ as $x_1\to\pm\infty$).
\par We have added the free term $F^m$ into (\ref{e2}) to study stability in more detail below.
\par The construction of this splitting is similar to the case of the 2D schemes without averages \cite{DZZ13} and with the Numerov average \cite{ZR13}.
Note that we have omitted operators $\overline{s}_N$ arising in the course of splitting on both sides of (\ref{e1}) and (\ref{e3}).
Clearly equations \eqref{e1} and \eqref{e3} are simply reduced to the explicit formulas
\begin{equation}
 \breve{\Psi}^m=\mathcal{E}^m\Psi^{m-1},\ \
 \Psi^m=\mathcal{E}^m\widetilde{\Psi}^m,\ \ \text{with}\ \
 \mathcal{E}^m
 :=\Bigl({1-i\displaystyle{\frac{\tau_m}{4\hbar}}\, \Delta V}\Bigr)/
        \Bigl({1+i\displaystyle{\frac{\tau_m}{4\hbar}}\, \Delta V}\Bigr).
\label{e6}
\end{equation}
The main equation (\ref{e2}) is similar to the original one (\ref{f1}) on the time level $m$ but it is essentially simplified by
replacing $V(x)$ by $\widetilde{V}(x_1)$. The functions $\breve{\Psi}$ and $\widetilde{\Psi}$ are auxiliary unknowns while $\Psi$ is the main one.
\par From (\ref{e6}) and (\ref {e4}) we get immediately
\begin{equation}
 |\breve{\Psi}^m|=|\Psi^{m-1}|,\ \ |\Psi^m|=|\widetilde{\Psi}^m|\ \ \mbox{on}\ \ \overline{\omega}_{h,\infty};
\label{ff1}
\end{equation}
moreover, since $\Delta V_{{\bf j}}=0$ for $j_1\geq J_1-1$, we simply have
\begin{equation}
 \breve{\Psi}^m_{{\bf j}}=\Psi^{m-1}_{{\bf j}},\ \ \Psi^m_{{\bf j}}=\widetilde{\Psi}^m_{{\bf j}}\ \ \mbox{for}\
 \ j_1\geq J_1-1.
\label{ff2}
\end{equation}
\par This splitting modifies the scheme (\ref{f1}), (\ref{f2}) only in time and is symmetric in time due to steps (\ref{e1}) and (\ref{e3}). Thus, concerning the approximation error, it reduces neither the 4th order in $|h|$ nor the 2nd order in $\tau_{\max}$. This can be checked also more formally similarly to \cite{DZZ13,ZR13}.
\section{Stability of the splitting higher order scheme on an infinite space mesh}
Let $H_h$ be a Hilbert space of mesh functions $W$: $\overline{\omega}_{h,\infty}\to {\mathbb C}$ such that
$\left. W\right|_{\Gamma_{h,\infty}}=0$ and
\[
\sum_{j_1=1}^{\infty} \sum_{j_2=1}^{J_2-1}\ldots \sum_{j_n=1}^{J_n-1}\left|W_{{\mathbf j}}\right|^2<\infty
\]
endowed with the following mesh counterpart of the inner product in $L^2(\Pi_{\infty})$
\[
\left(U,W\right)_{H_h}
:=\sum_{j_1=1}^{\infty} \sum_{j_2=1}^{J_2-1}\ldots \sum_{j_n=1}^{J_n-1}U_{{\mathbf j}}W_{{\mathbf j}}^* h_1\dots h_n.
\]
\par We only need the first assumption (\ref{a4}) in all this section.
\begin{proposition}
\label{ph}
Let $\Psi_{h}^0, F^m\in H_{h}$ for any $m\geq 1$.
Then there exists a unique solution to the splitting scheme \eqref{e1}-\eqref{e5}
such that $\Psi^m\in H_{h}$ for any $m\geq 0$, and the following $L^2$-stability bound holds
\begin{equation}
 \max_{0\leq m\leq M}\|\Psi^m\|_{H_{h}}
 \leq\|\Psi^0_{h}\|_{H_{h}}
 +\frac{2}{\hbar}\left(\frac{3}{2}\right)^n\sum_{m=1}^M
 \left\|F^m\right\|_{H_{h}}\tau_m\ \ \text{for any}\ \ M\geq 1.
\label{h2}
\end{equation}
\par Moreover, in the particular case $F=0$, the following mass conservation law holds
\begin{equation}
 \|\Psi^m\|_{H_{h}}^2=
 \|\Psi^0_{h}\|_{H_{h}}^2
 \ \ \text{for any}\ \ m\geq 1.
\label{h3}
\end{equation}
\end{proposition}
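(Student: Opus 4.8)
The plan is to treat the three sub-steps \eqref{e1}--\eqref{e3} separately and then compose them. Steps \eqref{e1} and \eqref{e3} are explicit through \eqref{e6}, and since $|\mathcal{E}^m_{\bf j}|=1$ pointwise, identity \eqref{ff1} gives $\|\breve{\Psi}^m\|_{H_h}=\|\Psi^{m-1}\|_{H_h}$ and $\|\Psi^m\|_{H_h}=\|\widetilde{\Psi}^m\|_{H_h}$, so these steps are norm-preserving and cause no difficulty. All the content is in the middle step \eqref{e2}. The key manoeuvre is to divide \eqref{e2} by the positive definite operator $\overline{s}_N$ (invertible since its eigenvalues lie in $[(2/3)^n,1]$, whence $\|\overline{s}_N^{-1}\|\leq(3/2)^n$) and rewrite it as the midpoint (Crank--Nicolson) relation
\[
 \frac{i\hbar}{\tau_m}\bigl(\widetilde{\Psi}^m-\breve{\Psi}^m\bigr)
 =\mathcal{L}\,\frac{\widetilde{\Psi}^m+\breve{\Psi}^m}{2}+\overline{s}_N^{-1}F^m,
 \qquad
 \mathcal{L}:=-c_{\hbar}\,\overline{s}_N^{-1}\bar{\Delta}_{hN}+\widetilde{V}.
\]

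The crucial, and I expect the only delicate, point is that $\mathcal{L}$ is self-adjoint on $H_h$. Multiplication by the real function $\widetilde{V}$ is obviously self-adjoint, so everything reduces to $\overline{s}_N^{-1}\bar{\Delta}_{hN}$. Each second difference $\partial_k\bar{\partial}_k$ is self-adjoint on $H_h$ (summation by parts, the boundary terms vanishing by the Dirichlet condition on $\Gamma_{h,\infty}$ and the $\ell^2$-decay as $j_1\to\infty$), and the operators $\partial_1\bar{\partial}_1,\dots,\partial_n\bar{\partial}_n$ commute pairwise because they act in orthogonal mesh directions with constant coefficients. Since both $\overline{s}_N=\prod_\ell s_{N\ell}$ and $\bar{\Delta}_{hN}=\sum_k\overline{s}_{N\widehat{k}}\,\partial_k\bar{\partial}_k$ are polynomials in these commuting self-adjoint operators, they are self-adjoint and commute, $[\overline{s}_N,\bar{\Delta}_{hN}]=0$; hence $\overline{s}_N^{-1}\bar{\Delta}_{hN}$ is self-adjoint and so is $\mathcal{L}$. (Note that keeping $\overline{s}_N$ in front of the potential term, as in \eqref{e2}, obscures this, because $\overline{s}_N$ does not commute with $\widetilde{V}$; dividing by $\overline{s}_N$ first is exactly what makes the self-adjoint structure visible.)

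Existence and uniqueness then follow at once: for real self-adjoint $\mathcal{L}$ the operator $\tfrac{i\hbar}{\tau_m}I-\tfrac12\mathcal{L}$ is boundedly invertible on $H_h$, its spectrum lying on the line $\Ima z=\hbar/\tau_m$ away from $0$, so that $\|(\tfrac{i\hbar}{\tau_m}I-\tfrac12\mathcal{L})^{-1}\|\leq \tau_m/\hbar$. Thus \eqref{e2} has a unique solution $\widetilde{\Psi}^m\in H_h$ for given $\breve{\Psi}^m,F^m\in H_h$, and together with the explicit steps \eqref{e1}, \eqref{e3} this determines $\Psi^m\in H_h$ uniquely; induction on $m$ gives the claimed unique solution with $\Psi^m\in H_h$ for all $m\geq0$.

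For stability I would take the $H_h$-inner product of the rewritten equation with $\widetilde{\Psi}^m+\breve{\Psi}^m$ and pass to imaginary parts. Self-adjointness of $\mathcal{L}$ makes $(\mathcal{L}\,\Phi,\Phi)_{H_h}$ real (with $\Phi:=\tfrac12(\widetilde{\Psi}^m+\breve{\Psi}^m)$), so the kinetic and potential terms drop out and one is left with
\[
 \frac{\hbar}{\tau_m}\bigl(\|\widetilde{\Psi}^m\|_{H_h}^2-\|\breve{\Psi}^m\|_{H_h}^2\bigr)
 =2\,\Ima\bigl(\overline{s}_N^{-1}F^m,\Phi\bigr)_{H_h}.
\]
For $F=0$ this yields $\|\widetilde{\Psi}^m\|_{H_h}=\|\breve{\Psi}^m\|_{H_h}$, which combined with the exact norm-preservation of steps \eqref{e1}, \eqref{e3} gives the conservation law \eqref{h3}. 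For general $F^m$, Cauchy--Schwarz, $\|\overline{s}_N^{-1}\|\leq(3/2)^n$ and the factorization $\|\widetilde{\Psi}^m\|_{H_h}^2-\|\breve{\Psi}^m\|_{H_h}^2=(\|\widetilde{\Psi}^m\|_{H_h}-\|\breve{\Psi}^m\|_{H_h})(\|\widetilde{\Psi}^m\|_{H_h}+\|\breve{\Psi}^m\|_{H_h})$ reduce the identity to $\|\widetilde{\Psi}^m\|_{H_h}-\|\breve{\Psi}^m\|_{H_h}\leq \tfrac1\hbar(3/2)^n\tau_m\|F^m\|_{H_h}$; chaining this with $\|\breve{\Psi}^m\|_{H_h}=\|\Psi^{m-1}\|_{H_h}$ and $\|\Psi^m\|_{H_h}=\|\widetilde{\Psi}^m\|_{H_h}$ and summing over $m$ telescopes to \eqref{h2} (in fact with the smaller constant $\tfrac1\hbar(3/2)^n$). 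The main obstacle is entirely the self-adjointness/commutativity step above; once $\mathcal{L}=\mathcal{L}^*$ is established, the remainder is the standard energy argument for a midpoint scheme.
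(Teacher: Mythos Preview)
Your proof is correct and follows essentially the same route as the paper: apply $\overline{s}_N^{-1}$ to \eqref{e2} to expose a self-adjoint operator (the paper writes it as $A_h=c_\hbar\sum_k s_{Nk}^{-1}\Lambda_k+\widetilde{V}I$, which is your $\mathcal{L}$ after simplifying $\overline{s}_N^{-1}\bar{\Delta}_{hN}$), then use the standard Crank--Nicolson energy identity together with the isometry of steps \eqref{e1}, \eqref{e3}. Your telescoping at the end, dividing out $\|\widetilde{\Psi}^m\|+\|\breve{\Psi}^m\|$ step by step, is slightly sharper than the paper's global $a^2\le b^2+Ca$ argument and indeed yields the bound with $\tfrac{1}{\hbar}(3/2)^n$ in place of $\tfrac{2}{\hbar}(3/2)^n$; one small caveat is that your justification of $\|\overline{s}_N^{-1}\|\le(3/2)^n$ via ``eigenvalues in $[(2/3)^n,1]$'' is literally valid only in the finite directions, while in the unbounded $x_1$-direction the paper invokes the quadratic-form bound $(s_{N1}W,W)_{H_h}\ge\tfrac23\|W\|_{H_h}^2$ from \cite{DZZ09} --- but the conclusion is the same.
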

\begin{proof}
We first rewrite the main equation (\ref{e2}) as a suitable operator equation in $H_h$. We set $\Lambda_kW:=-\partial_k\bar{\partial}_kW$
on $\omega_{h,\infty}$ and $\Lambda_kW:=0$ on $\Gamma_{h,\infty}$, for $1\leq k\leq n$. Then
\begin{gather*}
\Lambda_k,\ \
s_{Nk}=I-\frac{h_k^2}{12}\,\Lambda_k,\ \
\overline{s}_N=s_{N1}\dots s_{Nn},\ \
\overline{s}_{N\widehat{k}}=\prod_{1\leq \ell\leq n,\, \ell\neq k}s_{N\ell}
\end{gather*}
and $-\bar{\Delta}_{hN}=s_{N1}\Lambda_1+\dots + s_{Nn}\Lambda_n$ are bounded self-adjoint operators in $H_h$. Moreover
\[
\left(s_{Nk}W,W\right)_{H_h}\geq \frac{2}{3} \left\|W\right\|_{H_h}^2\ \ \mbox{for any}\ W\in H_h
\]
(for $k=1$, see \cite{DZZ09} taking there the particular space average with a parameter $\theta=\frac{1}{12}$), therefore the inverse operator $s_{Nk}^{-1}$ exists and is bounded
\begin{equation}
 \left\|s_{Nk}^{-1}\right\|_{{\mathcal L}(H_h)}\leq \frac{3}{2}.
\label{i1}
\end{equation}
Therefore we can consider (\ref{e2}) as an operator equation in $H_h$. In the spirit of \cite{SA08}, we apply $\overline{s}_{N}^{-1}=s_{N1}^{-1}\cdots s_{Nn}^{-1}$ to it and obtain
\begin{equation}
i\hbar\,\frac{\widetilde{\Psi}^m-\breve{\Psi}^m}{\tau_m}
 =A_h\,\frac{\widetilde{\Psi}^m+\breve{\Psi}^m}{2}+\overline{s}_{N}^{-1}F^m
\ \ \text{in}\ \ H_h,
\label{i2}
\end{equation}
where $A_h:=c_{\hbar}\left(
s_{N1}^{-1}\Lambda_1 +\dots +s_{Nn}^{-1}\Lambda_n\right)+\widetilde{V}I$.
Since $\Lambda_k$ and $s_{Nk}$ commute, so do  $\Lambda_k$ and $s_{Nk}^{-1}$, and consequently $A_h$ is a bounded self-adjoint operator in $H_h$.
\par We rewrite equation (\ref{i2}) in another form
\[
\left(I+i\frac{\tau_m}{2\hbar}\, A_h\right)\widetilde{\Psi}^m
=B^m
:=
\left(I-i\frac{\tau_m}{2\hbar}\, A_h\right)\breve{\Psi}^m
-i\frac{\tau_m}{\hbar}\,\overline{s}_{N}^{-1}F^m.
\]
Since the operator $I+i\frac{\tau_m}{2\hbar}\, A_h$ is invertible, the equation has a unique solution $\widetilde{\Psi}^m\in H_h$ provided that $\breve{\Psi}^m,F^m\in H_h$. This implies the existence of a unique solution of the splitting scheme such that $\Psi^m\in H_h$ for any $m\geq 0$.
\par We can now follow the lines of \cite{DZZ13}. Note first that the pointwise equalities (\ref{ff1}) imply
\begin{equation}
 \|\breve{\Psi}^m\|_{H_h}=\|\Psi^{m-1}\|_{H_h},\ \ \|\Psi^m\|_{H_h}=\|\widetilde{\Psi}^m\|_{H_h}.
\label{j1}
\end{equation}
Multiplying the operator equation (\ref{i2}) by $\frac{\widetilde{\Psi}^m+\breve{\Psi}^m}{2}$, separating the imaginary part of the result and using the property $A_h=A_h^*$, we get
\[
\frac{\hbar}{2\tau_m} \left(
\|\widetilde{\Psi}^m\|_{H_h}^2-\|\breve{\Psi}^m\|_{H_h}^2
\right)
=
\Ima
\Bigl(
 \overline{s}_{N}^{-1}F^m,
\frac{\widetilde{\Psi}^m+\breve{\Psi}^m}{2}
\Bigr)_{H_h}.
\]
Applying equalities (\ref{j1}), multiplying both sides by $\frac{2\tau_m}{\hbar}$ and summing up the result over $m=1,\dots,M$, we obtain
\begin{equation}
\|\Psi^M\|_{H_h}^2
=\|\Psi^0_h\|_{H_h}^2
+\frac{2}{\hbar}
\sum_{m=1}^M
\Ima
\Bigl(
 \overline{s}_{N}^{-1}F^m,
\frac{\widetilde{\Psi}^m+\breve{\Psi}^m}{2}
\Bigr)_{H_h} \tau_m.
\label{j2}
\end{equation}
Owing to (\ref{i1}) and (\ref{j1}) we have
\begin{gather*}
\|\Psi^m\|_{H_h}^2
\leq
\|\Psi^0\|_{H_h}^2
+\frac{2}{\hbar}
\sum_{m=1}^M
\|s_{N}^{-1}F^m\|_{H_h}
\frac{1}{2}\,(\|\widetilde{\Psi}^m\|_{H_h}+\|\breve{\Psi}^m\|_{H_h}) \tau_m
\\
\leq
\|\Psi^0\|_{H_h}^2
+\frac{2}{\hbar}
\left(\frac{3}{2}\right)^n
\sum_{m=1}^M
\|F^m\|_{H_h} \tau_m
\max_{0\leq m\leq M}\|\Psi^m\|_{H_h}.
\end{gather*}
This inequality directly implies bound (\ref{h2}).
Also (\ref{h3}) follows from (\ref{j2}).
\end{proof}

\section{The splitting higher order scheme on a finite space mesh}
\setcounter{equation}{0}
\setcounter{proposition}{0}
\setcounter{theorem}{0}
\setcounter{lemma}{0}
\setcounter{corollary}{0}
\setcounter{remark}{0}
The splitting scheme (\ref{e1})-(\ref{e5}) is not practically implementable because of the infinite number of unknowns on each time level.
We now intend to restrict its solution to a finite space mesh $\overline{\omega}_h:=\{ x_{{\bf j}}\in \overline{\omega}_{h,\infty};\, 0\leq j_1\leq J_1\}$.
Let $\omega_h:=\{ x_{{\bf j}}\in \omega_{h,\infty};\, 1\leq j_1\leq J_1-1\}$ and
$\partial\omega_h=\overline{\omega}_h\backslash \omega_h$
be its internal part and boundary, and
$\Gamma_{1h}:=\{x_{{\bf j}};\, j_1=J_1,\, 1\leq j_2\leq J_2-1,\dots,1\leq j_n\leq J_n-1\}$ and
$\Gamma_h=\partial\omega_h\backslash \Gamma_{1h}$ be the boundary parts.
Let $\overline{\omega}_{h\widehat{1}}$
and $\omega_{h\widehat{1}}$ be $(n-1)$-dimensional versions of $\overline{\omega}_h$ and $\omega_h$ (excluding the direction $x_1$)
as well as $\omega_{h1}:=\{j_1h_1;\, 1\leq j_1\leq J_1-1\}$ (so that $\omega_h=\omega_{h1}\times\omega_{h\widehat{1}}$).
\par
By definition, \textit{the discrete transparent boundary condition} (TBC) is a boundary condition on $\Gamma_{1h}$ which admits to accomplish the above mentioned restriction.
\par To write down the discrete TBC, we need operators
\[
s_{N1}^{\pm}W_j=\frac{5}{12}\,W_j+\frac{1}{12}\,W_{j\pm 1},\ \ \overline{s}_{N,\widehat{1k}}:=\prod_{2\leq \ell\leq n,\, \ell\neq k} s_{N\ell},
\]
so that $s_{N1}=s_{N1}^-+s_{N1}^+$ and $\overline{s}_{N\widehat{k}}=s_{N1}\overline{s}_{N\widehat{1k}}$, for $2\leq k\leq n$.
\par We also exploit the direct and inverse discrete Fourier sine transforms in direction $x_k$
\begin{gather*}
P^{(q)}=\left({\mathcal F}_kP\right)^{(q)}:= \frac{2}{J_k} \sum_{j=1}^{J_k-1} P_j \sin \frac{\pi q j}{J_k},\ \ 1\leq q\leq J_k-1,
\\
P_j=\left({\mathcal F}_k^{-1}P^{(\cdot)}\right)_j:= \sum_{q=1}^{J_k-1} P^{(q)} \sin \frac{\pi q j}{J_k},\ \ 1\leq j\leq J_k-1.
\end{gather*}
The corresponding eigenvalues of $-\partial_k\bar{\partial}_k$ and $s_{Nk}$ are
$\lambda_q^{(k)}$ and $\sigma_q^{(k)}=1-\frac{1}{3}\sin^2 \frac{\pi q h_k}{2X_k}\in (\frac23,1)$.
\par Given a function $W$: $\overline{\omega}_{h}\to {\mathbb C}$, denote by $W_{J_1}$ its trace on $\Gamma_{1h}$.
Let ${\bf \Psi}^m_{J_1}=\{\Psi_{J_1}^0,\dots,\Psi_{J_1}^m\}$ be the vector function. Given functions
$R,Q$: $\overline{\omega}^{\,\tau}\to {\mathbb C}$, we denote by
\[
(R*Q)^m:=\sum_{p=0}^m R^pQ^{m-p},\ \ m\geq 0,
\]
their discrete convolution product.
Let the time mesh be uniform with a step $\tau>0$ below.
\begin{proposition}
\label{pk}
Let $ F^m=0$ and $\Psi_{h}^0=0$ on
$\omega_{h,\infty}\backslash\omega_h$ for any $m\geq 1$ and
$\bigl.\Psi_{h}^0\bigr|_{j_1=J_1-1}=0$.
\par The solution to the splitting scheme \eqref{e1}-\eqref{e5}
such that $\Psi^m\in H_{h}$ for any $m\geq 0$ satisfies the following three-step splitting scheme on the finite space mesh $\overline{\omega}_h$
\begin{gather}
 i\hbar\, \frac{\breve{\Psi}^m-\Psi^{m-1}}{\tau/2}
 =\Delta V\frac{\breve{\Psi}^m+\Psi^{m-1}}{2}\ \
 \text{on}\ \ \omega_{h}\cup \Gamma_{1h},
\label{l1}\\[1mm]
 i\hbar\overline{s}_N\,\frac{\widetilde{\Psi}^m-\breve{\Psi}^m}{\tau}
 =-c_{\hbar}\bar{\Delta}_{hN}\frac{\widetilde{\Psi}^m+\breve{\Psi}^m}{2}
 +\overline{s}_N\Bigl(\widetilde{V}\frac{\widetilde{\Psi}^m+\breve{\Psi}^m}{2}\Bigr)
+F^m
\ \ \text{on}\ \ \omega_{h},
\label{l2}\\[1mm]
 i\hbar\,\frac{\Psi^m-\widetilde{\Psi}^m}{\tau/2}
 =\Delta V\frac{\Psi^m+\widetilde{\Psi}^m}{2}\ \
 \text{on}\ \ \omega_{h}\cup \Gamma_{1h},
\label{l3}
\end{gather}
with the boundary and initial conditions
\begin{equation}
 \breve{\Psi}^m|_{\Gamma_h}=0,\ \ \widetilde{\Psi}^m|_{\Gamma_h}=0,\ \
 \Psi^m|_{\Gamma_h}=0,
\label{l4}
\end{equation}
\[
 {\mathcal D}_{1h}(\widetilde{\Psi}^m,\breve{\Psi}^m)
:= c_{\hbar} \overline{s}_{N\widehat{1}}\bar{\partial}_1\, \frac{\widetilde{\Psi}^m+\breve{\Psi}^m}{2}
-h_1s_{N1}^-
\Bigl\{i\hbar\overline{s}_{N\widehat{1}}\,\frac{\widetilde{\Psi}^m-\breve{\Psi}^m}{\tau}
\]
\begin{equation}
+\left[c_{\hbar}
\left(\overline{s}_{N\widehat{12}}\partial_2\bar{\partial}_2
+\dots+\overline{s}_{N\widehat{1n}}\partial_n\bar{\partial}_n\right)
-V_{\infty}\overline{s}_{N\widehat{1}}
\right]\frac{\widetilde{\Psi}^m+\breve{\Psi}^m}{2}
\Bigr\}
=c_{\hbar}{\mathcal S}^m_{\rm ref}\widetilde{{\mathbf\Psi}}^m_{J_1}\ \ \mbox{on}\ \ \Gamma_{1h},
\label{l5}
\end{equation}
\begin{equation}
\Psi^0=\Psi^0_h\ \ \text{on}\ \ \overline{\omega}_{h}  ,
\label{l6}
\end{equation}
for any $m\geq 1$.
\par The operator on the right in the discrete TBC (\ref{l5}) is given by
\begin{equation}
{\mathcal S}^m_{\rm ref} {\mathbf \Phi}^m:=
{\mathcal F}_2^{-1}\dots {\mathcal F}_n^{-1}\left[
\sigma^{(2)}_{q_2}\dots \sigma^{(n)}_{q_n}R_{\mathbf q}*\Phi^{\mathbf q}\right]^m,
\label{l7}
\end{equation}
for any $\Phi$: $\omega_{h\widehat{1}}\times \overline{\omega}^{\,\tau}\to {\mathbb C}$ such that $\Phi^0=0$, with
${\mathbf \Phi}^m:=\{\Phi^0,\dots,\Phi^m\}$ and
\[
 \Phi^{\mathbf q}
 :=({\mathcal F}_n\dots ({\mathcal F}_2\Phi)^{(q_2)}\dots)^{(q_n)},\ \
 {\mathbf q}=(q_2,\dots,q_n).
\]
The discrete convolution kernel in (\ref{l7}) has the form
\begin{equation}
R_{\mathbf q}=R\left[V_{\infty,{\mathbf q}}\right],\ \
\text{with}\ \ V_{\infty,{\mathbf q}}=
V_{\infty}+c_{\hbar}
\Bigl(
\frac{\lambda^{(2)}_{q_2}}{\sigma^{(2)}_{q_2}}+\dots +\frac{\lambda^{(n)}_{q_n}}{\sigma^{(n)}_{q_n}}
\Bigr),
\label{l8}
\end{equation}
where $R[V_{\infty}]$ can be computed recurrently by
\begin{gather}
R^0[V_{\infty}]=c_1,\ \
R^1[V_{\infty}]=-c_1\varkappa \mu,
\label{l8a}\\[1mm]
R^m[V_{\infty}]=\frac{2m-3}{m}\, \varkappa \mu R^{m-1}[V_{\infty}]-\frac{m-3}{m}\, \varkappa^2 R^{m-2}[V_{\infty}]\ \ \mbox{for}\ \ m\geq 2.
\label{l8b}
\end{gather}
Here the coefficients $c_1,\varkappa$ and $\mu$ are defined by
\[
c_1=-\frac{|\alpha|^{1/2}}{2}e^{-i(\arg \alpha)/2},\ \ \varkappa=-e^{i\arg \alpha},\ \ \mu=\frac{\beta}{|\alpha|}\in(-1,1),
\]
\[
\alpha=2a+\frac{2}{3}h_1^2a^2\neq 0,\ \ \arg \alpha\in(0,2\pi),\ \ \beta=2\Rea a+\frac{2}{3}h_1^2|a|^2,\ \
a=\frac{V_{\infty}}{2 c_\hbar}+i\, \frac{\hbar}{\tau c_\hbar}.
\]
\end{proposition}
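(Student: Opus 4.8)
The plan is to treat the finite-mesh scheme as the restriction to $\overline\omega_h$ of the infinite-mesh solution furnished by Proposition~\ref{ph}. Equations \eqref{l1}--\eqref{l3} on $\omega_h\cup\Gamma_{1h}$, the lateral conditions \eqref{l4} and the initial condition \eqref{l6} are then nothing but \eqref{e1}--\eqref{e5} read off on the finite mesh, so the whole content lies in the discrete TBC \eqref{l5}, which must encode the effect of the discarded exterior nodes $j_1\geq J_1+1$ through a relation involving only mesh values at $j_1=J_1,\,J_1-1$ and the history trace $\widetilde{\mathbf\Psi}^m_{J_1}$. First I would note that, since $\Delta V_{\mathbf j}=0$ for $j_1\geq J_1-1$, the pointwise equalities \eqref{ff2} hold; hence in the exterior the Strang half-steps \eqref{e1}, \eqref{e3} collapse ($\breve\Psi^m=\Psi^{m-1}$, $\widetilde\Psi^m=\Psi^m$) and the scheme reduces there to the single constant-coefficient equation \eqref{f1} with $V\equiv V_\infty$ and $F=0$, which holds in particular at every node with $j_1\geq J_1$ (whose Numerov stencil stays in the constant region).

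Next I would diagonalise the transverse directions by applying ${\mathcal F}_2\cdots{\mathcal F}_n$, which replaces each $\partial_k\bar\partial_k$ by $-\lambda^{(k)}_{q_k}$ and each $s_{Nk}$ by $\sigma^{(k)}_{q_k}$ for $k\geq2$ while leaving $s_{N1}$ and $\partial_1\bar\partial_1$ intact. Using $\overline{s}_N=s_{N1}\overline{s}_{N\widehat1}$ and $\overline{s}_{N\widehat k}=s_{N1}\overline{s}_{N\widehat{1k}}$, the common positive factor $\sigma_{\mathbf q}:=\sigma^{(2)}_{q_2}\cdots\sigma^{(n)}_{q_n}$ cancels throughout, and for each fixed ${\mathbf q}$ the exterior equation becomes a purely one-dimensional Numerov--Crank--Nicolson equation in $j_1$ for $\Phi^{\mathbf q}$. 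The essential algebraic point is that the factors $s_{N1}$ carried by the transverse terms $\lambda^{(k)}_{q_k}$ merge with the mass operator $s_{N1}$, so the transverse eigenvalues enter multiplied by $s_{N1}$, i.e. precisely as a $1$D constant potential; this is what produces the shifted value $V_{\infty,{\mathbf q}}=V_\infty+c_\hbar\sum_{k=2}^n\lambda^{(k)}_{q_k}/\sigma^{(k)}_{q_k}$ of \eqref{l8}.

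For each such half-line problem I would pass to the generating-function (Z-)transform in time; the hypotheses $F=0$, $\Psi^0_h=0$ off $\omega_h$ and $\Psi^0_h|_{j_1=J_1-1}=0$ make all exterior stencil data vanish, so the transform yields a homogeneous second-order constant-coefficient recurrence in $j_1$. Its characteristic equation is palindromic (both $s_{N1}$ and $\partial_1\bar\partial_1$ have symmetric stencils), hence its two roots are reciprocal, and the condition at infinity in \eqref{a2} selects the root $\zeta$ with $|\zeta|\leq1$, giving $\Phi^{\mathbf q}_{j_1+1}=\zeta\,\Phi^{\mathbf q}_{j_1}$ for $j_1\geq J_1$. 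I would then write the transformed equation at the boundary node $j_1=J_1$, split $s_{N1}=s_{N1}^-+s_{N1}^+$, and use $\Phi^{\mathbf q}_{J_1+1}=\zeta\,\Phi^{\mathbf q}_{J_1}$ to convert the forward half $s_{N1}^+$ (which alone reaches the discarded node) into multiplication by a $\zeta$-dependent factor on the trace, while $s_{N1}^-$ and the flux $\overline{s}_{N\widehat1}\bar\partial_1$ remain interior. This reproduces exactly the flux form on the left of \eqref{l5}, and inverting the time transform turns the $\zeta$-factor into the modal convolution kernel $R_{\mathbf q}=R[V_{\infty,{\mathbf q}}]$. Identifying its transform as the square root $c_1\bigl(1-2\mu\varkappa\zeta+\varkappa^2\zeta^2\bigr)^{1/2}$ and reading off the first-order ODE it satisfies gives the three-term recurrence \eqref{l8a}--\eqref{l8b}, with $c_1,\varkappa,\mu$ and $a,\alpha,\beta$ the data of the characteristic root; this one-dimensional computation is carried out in \cite{DZZ09} and used in \cite{ZR13}, so I would quote it rather than repeat it. Undoing ${\mathcal F}_2\cdots{\mathcal F}_n$ finally reassembles the modal relations into the operator ${\mathcal S}^m_{\rm ref}$ of \eqref{l7}.

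The main obstacle is the third step, in two respects. Algebraically, one must check that after inserting $\Phi^{\mathbf q}_{J_1+1}=\zeta\,\Phi^{\mathbf q}_{J_1}$ the surviving terms collect \emph{precisely} into $c_\hbar\overline{s}_{N\widehat1}\bar\partial_1$ together with the bracketed operator weighted by $h_1s_{N1}^-$, with no transverse remainder, and that the $\zeta$-factor matches the stated $\alpha,\beta,a$; here the palindromic structure and the commutation of $s_{N1}$ with $\Lambda_1$ are used. Conceptually more delicate is justifying that the decaying root is the correct selector, i.e. that the function built from $\zeta$ genuinely coincides on $\overline\omega_h$ with the restriction of the unique $H_h$-solution of Proposition~\ref{ph}; this needs the summability inherent in $\Psi^m\in H_h$ to exclude the non-decaying reciprocal root. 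Once exactness of the TBC is secured in this sense, the identification of the explicit constants is routine.
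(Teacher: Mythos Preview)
Your plan is correct and tracks the paper's proof closely: both reduce the exterior scheme to the constant-coefficient equation via \eqref{ff2}, apply ${\mathcal F}_2\cdots{\mathcal F}_n$ to decouple into 1D Numerov--Crank--Nicolson problems with the shifted potential $V_{\infty,{\mathbf q}}$, invoke \cite{DZZ09} for the 1D kernel $R[V_\infty]$ and its recurrence, and then invert the transverse Fourier transforms to assemble \eqref{l7}. The justification that the $H_h$-summability from Proposition~\ref{ph} selects the decaying characteristic root is also what the paper uses.

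The one organizational difference is in how the one-sided flux form \eqref{l5} is obtained. The paper does \emph{not} substitute $\Phi^{\mathbf q}_{J_1+1}=\zeta\,\Phi^{\mathbf q}_{J_1}$ directly; instead it first writes an equivalent \emph{symmetric} form of the TBC using the central difference $\overset{\circ}{\partial}_1$, because this is precisely the form that \cite{DZZ09} delivers for the 1D problem. The conversion from the symmetric form to \eqref{l5} is then a clean application of the two identities $\overset{\circ}{\partial}_1=\bar\partial_1+\tfrac{h_1}{2}\partial_1\bar\partial_1$ and $\tfrac{h_1^2}{12}\,\overset{\circ}{\partial}_1=-h_1 s_{N1}^-+\tfrac{h_1}{2}s_{N1}$ together with the exterior equation at $j_1=J_1$. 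Your description of the direct substitution is slightly off: it is not true that $s_{N1}^+$ ``alone reaches the discarded node''---the operator $\partial_1\bar\partial_1$ (hence the Laplacian term) also touches $j_1=J_1+1$, so both must be eliminated simultaneously, and the rearrangement that produces exactly $c_\hbar\overline{s}_{N\widehat 1}\bar\partial_1$ plus the $h_1 s_{N1}^-$--weighted bracket is not automatic. The paper's symmetric-form detour makes this algebra transparent and lets one quote \cite{DZZ09} verbatim; your direct route would require redoing that cancellation by hand.
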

\begin{proof}
Clearly it is sufficient to derive the discrete TBC (\ref{l5}) for the solution of the splitting scheme (\ref{e1})-(\ref{e5}) under the above assumptions on $\Psi^0_h$ and $F$.
\par
Due to property (\ref{f2}), equations (\ref{e1})-(\ref{e3}) are reduced on $\omega_{h,\infty}\backslash \omega_h$ and for $m\geq 1$ to the equation
\begin{equation}
i\hbar \overline{s}_N\bar{\partial}_t \Psi
=\left(-c_{\hbar}\bar{\Delta}_{hN}+V_{\infty}\overline{s}_N\right)\overline{s}_t \Psi
\ \ \mbox{on}\ \ (\omega_{h,\infty}\backslash \omega_h)\times\omega^{\tau}.
\label{n1}
\end{equation}
Also the boundary and initial conditions (\ref{l4}) and (\ref{l6}) imply that
\begin{equation}
 \left. \Psi^m\right|_{\Gamma_{h,\infty}\backslash \Gamma_h}=0\ \ \mbox{for}\ \ m\geq 1,\ \
\Psi^0=0\ \ \mbox{on}\ \ \{ j_1h_1 \}^{\infty}_{j_1=J_1-1}\times\overline{\omega}_{h\widehat{1}}
\label{n2}
\end{equation}
The discrete TBC (\ref{l5}) takes the following form
\[
c_{\hbar} \overline{s}_{N\widehat{1}}\bar{\partial}_1 \overline{s}_t \Psi^m
-h_1 s_{N1}^-
\left\{
i\hbar\overline{s}_{N\widehat{1}} \bar{\partial}_t \Psi^m
+\left[
c_{\hbar}
\left(
\overline{s}_{N\widehat{12}}\partial_2\bar{\partial}_2
+\dots+
\overline{s}_{N\widehat{1n}}\partial_n\bar{\partial}_n
\right)
-V_{\infty}\overline{s}_{N\widehat{1}}
\right]\overline{s}_t \Psi^m
\right\}
\]
\begin{equation}
=
c_{\hbar}{\mathcal S}^m_{\rm ref} {\mathbf\Psi}^m_{J_1}
\ \ \mbox{on}\ \ \Gamma_{1h}.
\label{n3}
\end{equation}
\par Similarly to \cite{DZ06,DZZ09,IZ11}, we first construct the discrete TBC in the following symmetric form with respect to $x_1$
\[
\overset{\circ}{\partial}_1
\Bigl\{
c_{\hbar} \overline{s}_{N\widehat{1}} \overline{s}_t \Psi^m
+\frac{h_1^2}{12}\,
\left[
i\hbar \overline{s}_{N\widehat{1}}\bar{\partial}_t \Psi^m
+\left[
c_{\hbar}
\left(
\overline{s}_{N\widehat{12}}\partial_2\bar{\partial}_2
+\dots+
\overline{s}_{N\widehat{1n}}\partial_n\bar{\partial}_n
\right)
-V_{\infty}\overline{s}_{N\widehat{1}}
\right]
\overline{s}_t \Psi^m
\right]
\Bigr\}
\]
\begin{equation}
=
c_{\hbar}{\mathcal S}^m_{\rm ref} {\mathbf \Psi}^m_{J_1}
\ \ \mbox{on}\ \ \Gamma_{1h},\ \ \text{for any}\ \ m\geq 1.
\label{n4}
\end{equation}
Using the elementary formulas $\overset{\circ}{\partial}_1=\bar{\partial}_1+\frac{h_1}{2}\partial_1\bar{\partial}_1$ and
$\frac{h_1^2}{12}\,\overset{\circ}{\partial}_1=-h_1 s_{N1}^-+\frac{h_1}{2}s_{N1}$
and equation (\ref{n1}) on $\Gamma_{1h}$, we see that the discrete TBC (\ref{n3}) is equivalent to (\ref{n4}).

Now following \cite{AES03,DZ06,DZZ13,IZ11}, we apply the operator ${\mathcal F}_2\dots{\mathcal F}_n$ to equations
(\ref{n1}) and (\ref{n3}). Dividing the result by $\sigma^{(2)}_{q_2}\dots \sigma^{(n)}_{q_n}$, we obtain that a function $P:=\left({\mathcal F}_2\dots{\mathcal F}_n \Psi\right)^{\mathbf q}$ satisfies the 1D Numerov-Crank-Nicolson scheme
for the 1D Schr\"odinger equation with constant coefficients
\begin{equation}
i\hbar s_{N1}P=
\left(
-c_{\hbar}\partial_1\bar{\partial}_1+V_{\infty,{\mathbf q}}s_{N1}
\right)
\overline{s}_t P\ \ \mbox{on}\ \ \{ jh_1 \}^{\infty}_{j=J_1} \times \omega^{\tau},
\label{n5}
\end{equation}
with zero initial data
\begin{equation}
P^0=0\ \ \mbox{on}\ \ \{ j_1h_1 \}^{\infty}_{j_1=J_1-1},
\label{n6}
\end{equation}
see (\ref{n2}), and the boundary condition, for any $m\geq 1$
\begin{equation}
\overset{\circ}{\partial}_1
\left.\left[
c_{\hbar} \overline{s}_t P^m
+\frac{h_1^2}{12}
\left(
i\hbar \bar{\partial}_t P^m
-V_{\infty,{\mathbf q}}\overline{s}_t P^m
 \right)
\right]
\right|_{j=J_1}
=\frac{c_{\hbar}}{\sigma^{(2)}_{q_2}\dots \sigma^{(n)}_{q_n}}
\left(
{\mathcal F}_2\dots{\mathcal F}_n {\mathcal S}^m_{\rm ref}
{\mathbf\Psi}^m_{J_1}\right)^{\mathbf q}.
\label{n7}
\end{equation}
\par As it was calculated in \cite{DZZ09} (taking there $\theta=\frac{1}{12}$), solutions of (\ref{n5}), (\ref{n6}) satisfy
\begin{equation}
\overset{\circ}{\partial}_1
\Bigl.\Bigl[
c_{\hbar} \overline{s}_t P^m
+\frac{h_1^2}{12}
\left(
i\hbar \bar{\partial}_t P^m
-V_{\infty,{\mathbf q}}\overline{s}_t P^m
 \right)
\Bigr]
\Bigr|_{j=J_1}
=c_{\hbar}
\left(
R\left[V_{\infty,{\mathbf q}}\right]*P_{J_1}\right)^m,
\label{n8}
\end{equation}
for any $m\geq 1$, where $R\left[V_{\infty}\right]$ can be computed by the recurrent relations (\ref{l8a}), (\ref{l8b}). (Actually the formulas from \cite{DZZ09} are slightly modified and refined from misprints; also the recently checked fixed sign in the formula for $c_{1}$ is taken into account.) This was done in \cite{DZZ09} under suitable conditions on $P$ valid here due to the stability bound (\ref{h2}).
\par Comparing (\ref{n7}) and (\ref{n8}) leads to
\[
 \frac{1}{\sigma^{(2)}_{q_2}\dots \sigma^{(n)}_{q_n}}
\left(
{\mathcal F}_2\dots{\mathcal F}_n {\mathcal S}^m_{\rm ref}
{\mathbf\Psi}^m_{J_1}\right)^{\mathbf q}
=\left(
R\left[V_{\infty,{\mathbf q}}\right]*P\right)^m,
\]
and after multiplying by $\sigma^{(2)}_{q_2}\dots \sigma^{(n)}_{q_n}$ and applying
${\mathcal F}_2^{-1}\dots{\mathcal F}_n^{-1}$, we get formula (\ref{l7}).
\end{proof}
The form of the discrete TBC follows our previous studies \cite{DZ06,DZ07,DZZ09,DZZ13}
allowing to ensure both stability of schemes and the stable numerical implementation of the discrete TBCs; moreover, for $n=2$ they are equivalent to those constructed in \cite{ZZ11,IZ11} in the particular case $\theta=\frac{1}{12}$.
Notice that the following important summation identity coupling the operators in the main equation (\ref{l2}) and the discrete TBC (\ref{l5}) holds
\[
\Bigl(
 i\hbar \overline{s}_N\, \frac{\widetilde{\Psi}^m-\breve{\Psi}^m}{\tau}
 +
c_{\hbar}\bar{\Delta}_{hN}\frac{\widetilde{\Psi}^m+\breve{\Psi}^m}{2}
-\overline{s}_N\Bigl(\widetilde{V}\frac{\widetilde{\Psi}^m+\breve{\Psi}^m}{2}\Bigr),W\Bigr)_{\omega_h}
-\Bigl(
{\mathcal D}_{1h}(\widetilde{\Psi}^m,\breve{\Psi}^m)
_{J_1},W_{J_1}
\Bigr)_{\omega_{h\widehat{1}}}
\]
\[
=
\Bigl(
 \overline{s}_{N\widehat{1}}
\Bigl(
i\hbar\,\frac{\widetilde{\Psi}^m-\breve{\Psi}^m}{\tau}
-\widetilde{V}
\frac{\widetilde{\Psi}^m+\breve{\Psi}^m}{2}
\Bigr),W
\Bigr)_{\overline{\omega}_{hN1}\times\omega_{h\widehat{1}}}
+
c_{\hbar}
 \Bigl(
\overline{s}_{N\widehat{1}}\bar{\partial}_1\, \frac{\widetilde{\Psi}^m+\breve{\Psi}^m}{2}, \bar{\partial}_1 W\Bigr)_{\tilde{\omega}_h}
\]
\begin{equation}
+
c_{\hbar}
 \Bigl(
-\Bigl(
\overline{s}_{N\widehat{12}}\partial_2\bar{\partial}_2+\dots +\overline{s}_{N\widehat{1n}}\partial_n\bar{\partial}_n
\Bigr)
\frac{\widetilde{\Psi}^m+\breve{\Psi}^m}{2},W
\Bigr)_{\overline{\omega}_{hN1}\times\omega_{h\widehat{1}}}
\label{m1}
\end{equation}
for any $W$: $\overline{\omega}_h\to{\mathbb C}$ such that $\left. W\right|_{j_1=0}=0$.
Here we have used the collection of $L^2$-mesh inner products
\[
\left(U,W\right)_{\omega_h}
:=\sum_{j_1=1}^{J_1-1}\ldots\sum_{j_n=1}^{J_n-1}
U_{{\mathbf j}}W^*_{{\mathbf j}}h_1\ldots h_n,\,\
\left(U,W\right)_{\omega_{h\widehat{1}}}
:=\sum_{j_2=1}^{J_2-1}\dots\sum_{j_n=1}^{J_n-1}
U_{j_2,\dots,j_n}W^*_{j_2,\dots,j_n}h_2\dots h_n,
\]
\[
\left(U,W\right)_{\widetilde\omega_h}:=\left(U,W\right)_{\omega_h}+\left(U_{J_1},W_{J_1}\right)_{\omega_{h\widehat{1}}}h_1,
\]
\begin{equation}
\left(U,W\right)_{\overline{\omega}_{hN1}\times\omega_{h\widehat{1}}}
:=\left(s_{N1}U,W\right)_{\omega_h}
+\left(s_{N1}^-U_{J_1},W_{J_1}\right)_{\omega_{h\widehat{1}}} h_1.
\label{m2}
\end{equation}
According to \cite{DZZ09} (taking there $\theta=\frac{1}{12}$), the sesquilinear form (\ref{m2}) is Hermitian and positive definite on functions
$U,W$: $\overline{\omega}_h\to{\mathbb C}$ such that $\left. U\right|_{\Gamma_h}=\left. W\right|_{\Gamma_h}=0$.
In what follows, we need the norms $\|\cdot\|_{\omega_h}$ and $\|\cdot\|_{\widetilde \omega_h}$ associated to the first and third of these inner products.
\par
The summation identity (\ref{m1}) appears after rearranging terms on its left-hand side and summing by parts with respect to $x_1$ in the term $c_{\hbar}\overline{s}_{N\widehat{1}}\partial_1\bar{\partial}_1$.
\begin{lemma}
\label{ln}
The operator ${\mathcal S}^m_{\rm ref}$ satisfies the inequality \cite{DZ06}
\begin{equation}
\Ima\sum_{m=1}^M \left({\mathcal S}^m_{\rm ref}{\mathbf \Phi}^m,\overline{s}_t \Phi^m\right)_{\omega_{h\widehat{1}}}\tau \geq 0\ \ \mbox{for any}\ \ M\geq 1,
\label{nn1}
\end{equation}
for any function $\Phi$: $\omega_{h\widehat{1}}\times  \overline{\omega}^{\,\tau}\to {\mathbb C}$ such that $\Phi^0=0$.
\end{lemma}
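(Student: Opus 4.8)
**The plan is to reduce the multidimensional positivity inequality to the one-dimensional case already established in the cited reference.** The key structural observation is that the operator ${\mathcal S}^m_{\rm ref}$ defined in (\ref{l7}) acts diagonally after the discrete Fourier sine transforms ${\mathcal F}_2,\dots,{\mathcal F}_n$: in the transformed variables it reduces to multiplication by the positive weights $\sigma^{(2)}_{q_2}\dots\sigma^{(n)}_{q_n}$ together with a discrete convolution in time against the kernel $R_{\mathbf q}=R[V_{\infty,{\mathbf q}}]$. Since each $\sigma^{(k)}_{q_k}\in(\frac23,1)$ is real and positive, and since the whole expression is bilinear, the multidimensional sum should decouple into a superposition of one-dimensional convolution sums, one for each Fourier mode ${\mathbf q}$, each weighted by the positive factor $\prod_{k=2}^n\sigma^{(k)}_{q_k}$.

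\textbf{First I would invoke the Parseval-type identity for the discrete Fourier sine transform} to rewrite the inner product $\left({\mathcal S}^m_{\rm ref}{\mathbf\Phi}^m,\overline{s}_t\Phi^m\right)_{\omega_{h\widehat1}}$ in the frequency domain. The operator ${\mathcal S}^m_{\rm ref}$ is built precisely so that ${\mathcal F}_2\dots{\mathcal F}_n$ applied to ${\mathcal S}^m_{\rm ref}{\mathbf\Phi}^m$ yields $\sigma^{(2)}_{q_2}\dots\sigma^{(n)}_{q_n}(R_{\mathbf q}*\Phi^{\mathbf q})^m$; applying Parseval turns the spatial inner product over $\omega_{h\widehat1}$ into a weighted sum over the frequency indices ${\mathbf q}=(q_2,\dots,q_n)$ (up to the normalization constants intrinsic to ${\mathcal F}_k$). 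The result should be
\[
\Ima\sum_{m=1}^M\left({\mathcal S}^m_{\rm ref}{\mathbf\Phi}^m,\overline{s}_t\Phi^m\right)_{\omega_{h\widehat1}}\tau
=\sum_{\mathbf q}c_{\mathbf q}\,\sigma^{(2)}_{q_2}\dots\sigma^{(n)}_{q_n}\,
\Ima\sum_{m=1}^M\bigl(R_{\mathbf q}*\Phi^{\mathbf q}\bigr)^m\bigl(\overline{s}_t\Phi^{\mathbf q}\bigr)^{m*}\tau,
\]
where $c_{\mathbf q}>0$ collects the positive Parseval normalization factors.

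\textbf{The second step is to apply the scalar 1D result.} For each fixed ${\mathbf q}$, the inner sum is exactly the one-dimensional convolution-positivity expression for the kernel $R[V_{\infty,{\mathbf q}}]$, with $V_{\infty,{\mathbf q}}$ real (note $V_{\infty,{\mathbf q}}=V_\infty+c_\hbar\sum_k\lambda^{(k)}_{q_k}/\sigma^{(k)}_{q_k}$ is real since each $\lambda^{(k)}_{q_k},\sigma^{(k)}_{q_k}$ are real). This is precisely the one-dimensional inequality established in \cite{DZ06}, which guarantees that each inner imaginary part is nonnegative for every $M\geq1$ and every sequence with $\Phi^{\mathbf q,0}=0$. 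Since $\Phi^0=0$ forces $\Phi^{\mathbf q,0}=0$ for all ${\mathbf q}$, the hypothesis of the scalar result is met modewise.

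\textbf{The main obstacle I anticipate is the bookkeeping of the Parseval normalization and the verification that the frequency-domain reduction is exact rather than approximate.} One must confirm that the factor $\sigma^{(2)}_{q_2}\dots\sigma^{(n)}_{q_n}$ produced by the definition (\ref{l7}) combines cleanly with the Parseval weights, and that no cross terms between distinct modes survive — this rests on the orthogonality of the sine basis under $\left(\cdot,\cdot\right)_{\omega_{h\widehat1}}$. Once the decoupling and the positivity of the weights $c_{\mathbf q}\sigma^{(2)}_{q_2}\dots\sigma^{(n)}_{q_n}$ are in place, the multidimensional inequality (\ref{nn1}) follows immediately as a nonnegative linear combination of the scalar inequalities, so the essential content is genuinely inherited from the one-dimensional analysis in \cite{DZ06}.
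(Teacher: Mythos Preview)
Your proposal is correct and follows essentially the same route as the paper: diagonalize the spatial inner product via the discrete sine transforms (Parseval), so that the left-hand side of \eqref{nn1} becomes a sum over ${\mathbf q}$ with positive weights $\sigma^{(2)}_{q_2}\cdots\sigma^{(n)}_{q_n}$ of the one-dimensional convolution expressions, then invoke the known 1D positivity inequality modewise. The only minor discrepancy is the citation for the scalar inequality: the paper's proof appeals to \cite{DZZ09} (with $\theta=\tfrac{1}{12}$) rather than \cite{DZ06}, since the relevant kernel here is the Numerov one.
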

\begin{proof}
Following \cite{DZ06}, we use formula (\ref{l7}) and standard properties of ${\mathcal F}_2,\dots,{\mathcal F}_n$ and get
\[
\left({\mathcal S}^m_{\rm ref}{\mathbf \Phi}^m,\overline{s}_t \Phi^m\right)_{\omega_{h\widehat{1}}}
=\frac{X_2\dots X_n}{2^{n-1}}
\sum_{q_2=1}^{J_2-1}\ldots\sum_{q_n=1}^{J_n-1}
\sigma^{(2)}_{q_2}\dots \sigma^{(n)}_{q_n}
\left(
R_{\mathbf q}*\Phi^{\mathbf q}
\right)^m
\left(\overline{s}_t \Phi^m\right)^*.
\]
Consequently
\begin{gather}
\Ima \sum_{m=1}^M \left({\mathcal S}^m_{\rm ref}{\mathbf \Phi}^m,\overline{s}_t \Phi^m\right)_{\omega_{h\widehat{1}}}\tau
\nonumber\\
=\frac{X_2\dots X_n}{2^{n-1}}
\sum_{q_2=1}^{J_2-1}\ldots\sum_{q_n=1}^{J_n-1}
\sigma^{(2)}_{q_2}\dots \sigma^{(n)}_{q_n}
\Ima \sum_{m=1}^M
\left(
R[V_{\infty {\mathbf q}}]*\Phi^{\mathbf q}
\right)^m
\left(\overline{s}_t \Phi^m\right)^*\tau.
\label{nn2}
\end{gather}
The result follows from the similar 1D inequality proved in \cite{DZZ09} (taking there $\theta=\frac{1}{12}$).
\end{proof}
By construction, the splitting scheme (\ref{l1})-(\ref{l6}) on the finite space mesh has a solution. Let us prove its uniqueness;
notice that we do not need any restrictions on $\tau$ to this end (in contrast to \cite{ZR13}).
Let $\bigl.\Psi^0_h\bigr|_{j_1=J_1-1,\,J_1}=0$ below.
\begin{proposition}
\label{po}
The solution of the splitting scheme (\ref{l1})-(\ref{l6}) on the finite space mesh is unique.
It satisfies the following $L^2$-stability bound
\begin{equation}
\max_{0\leq m\leq M}\|\Psi^m\|_{\widetilde\omega_h}\leq \|\Psi^0_h\|_{\widetilde\omega_h}
+\frac{2}{\hbar}\,\left(\frac{3}{2}\right)^n \sum_{m=1}^M \| F^m\|_{\omega_h}\tau\ \ \mbox{for any}\ \ M\geq 1.
\label{o1}
\end{equation}
\end{proposition}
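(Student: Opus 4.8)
The plan is to establish uniqueness first and then to read off the stability bound \eqref{o1} from the infinite-mesh estimate \eqref{h2} of Proposition~\ref{ph}, exactly as the introduction announces (``stability from the former result for the infinite space mesh'').

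For uniqueness I would argue by induction on $m$: since the scheme is linear, it suffices to show that the homogeneous problem ($F^m=0$, $\Psi^0_h=0$) admits only the trivial solution. Assuming $\Psi^k=0$ on $\overline{\omega}_h$ for $k<m$, the explicit formulas \eqref{e6} together with the Dirichlet conditions \eqref{l4} force $\breve{\Psi}^m=0$ on all of $\overline{\omega}_h$, so \eqref{l2} becomes a homogeneous equation for $\widetilde{\Psi}^m$ coupled to the discrete TBC \eqref{l5} whose time-convolution history vanishes. The decisive step is to insert \eqref{l2} (with $\breve{\Psi}^m=F^m=0$) and \eqref{l5} into the summation identity \eqref{m1} taken with $W=\widetilde{\Psi}^m$: the first bracket on the left of \eqref{m1} is annihilated by the equation, so the left-hand side collapses to $-c_\hbar(\mathcal{S}^m_{\rm ref}\widetilde{\mathbf{\Psi}}^m_{J_1},\widetilde{\Psi}^m_{J_1})_{\omega_{h\widehat{1}}}$. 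Taking the imaginary part, the two $c_\hbar$-terms on the right of \eqref{m1} are real, because $\overline{s}_{N\widehat{1}}\bar{\partial}_1$ and $\overline{s}_{N\widehat{1k}}\partial_k\bar{\partial}_k$ are self-adjoint with respect to the Hermitian form \eqref{m2}, whereas the $\tfrac{i\hbar}{\tau}$-term yields $\tfrac{\hbar}{\tau}(\overline{s}_{N\widehat{1}}\widetilde{\Psi}^m,\widetilde{\Psi}^m)_{\overline{\omega}_{hN1}\times\omega_{h\widehat{1}}}$, a positive-definite multiple of $\widetilde{\Psi}^m$. Using \eqref{ff2} to identify $\widetilde{\Psi}^m_{J_1}$ with $\Psi^m_{J_1}$ and the vanishing of the history, the remaining boundary term is exactly the single nonzero time level of the sum in Lemma~\ref{ln}, hence its imaginary part is $\ge0$ and the left-hand term is $\le0$. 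The positive-definite quantity is then $\le0$, which forces $\widetilde{\Psi}^m=0$ and, via \eqref{e6}, $\Psi^m=\mathcal{E}^m\widetilde{\Psi}^m=0$. Since Lemma~\ref{ln} holds for every $\tau$, no step restriction is needed, in contrast with \cite{ZR13}.

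For stability I would not compute directly on the finite mesh but lift the data to the infinite one. Extending $\Psi^0_h$ and $F^m$ by zero onto $\omega_{h,\infty}\setminus\omega_h$, the hypotheses of Proposition~\ref{pk} are met, so Proposition~\ref{ph} furnishes $\widehat{\Psi}^m\in H_h$ satisfying \eqref{h2}, and by Proposition~\ref{pk} its restriction to $\overline{\omega}_h$ solves \eqref{l1}--\eqref{l6}; by the uniqueness just proved this restriction is $\Psi^m$. It then remains only to compare norms: for any $U$ one has $\|U\|_{\widetilde{\omega}_h}\le\|U\|_{H_h}$ (the $\widetilde{\omega}_h$-sum runs over $1\le j_1\le J_1$ only), while the vanishing of $\Psi^0_h$ at $j_1=J_1-1,J_1$ and the support of $F^m$ in $\omega_h$ give $\|\Psi^0_h\|_{H_h}=\|\Psi^0_h\|_{\widetilde{\omega}_h}$ and $\|F^m\|_{H_h}=\|F^m\|_{\omega_h}$. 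Substituting these three relations into \eqref{h2} produces \eqref{o1} verbatim.

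The hard part is the uniqueness step, and specifically the verification that \emph{every} interior contribution in the imaginary part of \eqref{m1} is genuinely real, so that only the sign-definite time term and the TBC term remain. The averaging and second-difference operators are covered by the self-adjointness built into \eqref{m2} (from \cite{DZZ09}), but the potential term $\overline{s}_N(\widetilde{V}\,\cdot)$ is delicate: multiplication by a non-constant $\widetilde{V}(x_1)$ does not commute with $s_{N1}$ and is therefore not self-adjoint for \eqref{m2}, so its imaginary part need not vanish. For the natural choice $\widetilde{V}\equiv V_\infty$ of the semi-infinite problem this term is real and the argument closes at once; for a general $\widetilde{V}(x_1)$ I would first apply $\mathcal{F}_2,\dots,\mathcal{F}_n$ to decouple \eqref{l2} into independent one-dimensional Numerov--Crank--Nicolson problems in $x_1$ (as in the derivation of Proposition~\ref{pk}) and invoke the one-dimensional uniqueness of \cite{DZZ09}, the constant-coefficient exterior structure making this reduction legitimate.
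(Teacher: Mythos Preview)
Your proposal is correct and follows the paper's own route closely: uniqueness by induction on $m$ via the summation identity \eqref{m1} with $\breve\Psi^m=0$ and $W=\widetilde\Psi^m=Y^m$, taking imaginary parts, and invoking Lemma~\ref{ln} for the sign of the boundary contribution; stability by zero-extension of the data to $\omega_{h,\infty}$, quoting Proposition~\ref{ph}, restricting via Proposition~\ref{pk}, and comparing $\|\cdot\|_{\widetilde\omega_h}$ with $\|\cdot\|_{H_h}$.

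Your last paragraph is in fact more careful than the paper's argument. The paper passes from \eqref{q1} to the next displayed identity by ``taking the imaginary part'', silently treating
$-\tfrac12\bigl(\overline{s}_{N\widehat1}(\widetilde V Y^m),Y^m\bigr)_{\overline\omega_{hN1}\times\omega_{h\widehat1}}$
as real. As you correctly point out, multiplication by a non-constant $\widetilde V(x_1)$ is \emph{not} self-adjoint for the weighted form \eqref{m2} (the tridiagonal $x_1$-weight built from $s_{N1}$ does not commute with it), so that step is justified as written only for constant $\widetilde V\equiv V_\infty$. Your proposed remedy---applying $\mathcal F_2,\dots,\mathcal F_n$ to decouple into the 1D problems \eqref{r1}--\eqref{r3} and invoking the 1D uniqueness of \cite{DZZ09}---is the natural fix and handles general $\widetilde V(x_1)$ without any step restriction; for $\widetilde V\equiv V_\infty$ the two arguments coincide.
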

\begin{proof}
Assume that there exist two solutions of the scheme (\ref{l1})-(\ref{l6}) and denote by $Y$ their difference.
Clearly $Y$ satisfies the homogeneous scheme (\ref{l1})-(\ref{l6}), with $F=0$ and $\Psi^0_h=0$.
\par
In order to establish uniqueness, it is sufficient to prove that if $Y^0=0,\dots,Y^{m-1}=0$, then $Y^m=0$. Under this assumption $Y^m$ satisfies a homogeneous equation
\begin{equation}
i\frac{\hbar}{\tau}\, \overline{s}_N Y^m
=-\frac{c_{\hbar}}{2}\, \bar{\Delta}_{hN}Y^m
+\frac{1}{2}\,\overline{s}_N(\widetilde{V}Y^m)\ \ \mbox{on}\ \ \omega_h,
\label{o2}
\end{equation}
together with the homogeneous boundary conditions
\begin{equation}
\left. Y^m\right|_{\Gamma_h}=0,\ \ {\mathcal D}_{1h}\left(Y^m,0\right)=c_{\hbar}{\mathcal S}^m_{\rm ref}{\mathbf Y}^m_{J_1}\ \ \mbox{on}\ \ \Gamma_{1h},
\label{o3}
\end{equation}
where ${\mathbf Y}^m=\{0,\dots,0,Y^m\}$, with $0$ appearing $m$ times.
\par Following \cite{ZR13}, applying the summation identity (\ref{m1}) in the case $\widetilde{\Psi}^m=Y^m$, $\breve{\Psi}^m=0$ and $W=Y^m$, and using (\ref{o2}) and (\ref{o3}), we get
\[
\Bigl(
\overline{s}_{N\widehat{1}}
\Bigl(
i\frac{\hbar}{\tau} Y^m
-\frac12 \widetilde{V}Y^m\Bigr),Y^m
\Bigr)_{\overline{\omega}_{hN1}\times\omega_{h\widehat{1}}}
+
\frac{c_{\hbar}}{2}
\Bigl(
\overline{s}_{N\widehat{1}}\, \bar{\partial}_1Y^m,\bar{\partial}_1Y^m
\Bigr)_{\widetilde{\omega}_{h}}
\]
\begin{equation}
+
\frac{c_{\hbar}}{2}
 \left(
-\left(
\overline{s}_{N\widehat{12}}\,\partial_2\bar{\partial}_2+\dots +\overline{s}_{N\widehat{1n}}\,\partial_n\bar{\partial}_n
\right)Y^m
,Y^m
\right)_{\overline{\omega}_{hN1}\times\omega_{h\widehat{1}}}
+
c_{\hbar}
\left(
{\mathcal S}^m_{\rm ref}{\mathbf Y}^m_{J_1},Y^m_{J_1}
\right)_{\omega_{h\widehat{1}}}=0.
\label{q1}
\end{equation}
\par Let $H_{h\widehat{1}}$ be the space of functions $P$: $\overline{\omega}_{h\widehat{1}}\to{\mathbb C}$ such that $P=0$ on
$\partial\omega_{h\widehat{1}}=\overline{\omega}_{h\widehat{1}}\backslash\omega_{h\widehat{1}}$ endowed with the inner product $\left(\cdot,\cdot\right)_{\omega_{h\widehat{1}}}$.
Setting $AP=0$ on $\partial\omega_{h\widehat{1}}$ for
 $A=\overline{s}_{N\widehat{1}}$,
 $-\overline{s}_{N\widehat{12}}\, \partial_2\bar{\partial}_2$,..., $-\overline{s}_{N\widehat{1n}}\,\partial_n\bar{\partial}_n$, we see that these operators are self-adjoint and positive definite in $H_{h\widehat{1}}$.
Therefore taking the imaginary part in (\ref{q1}), we obtain
\[
\frac{\hbar}{\tau}
\left(\overline{s}_{N\widehat{1}}Y^m,Y^m\right)_{\overline{\omega}_{hN1}
\times\omega_{h\widehat{1}}}
+
c_{\hbar}
\Ima \left(
{\mathcal S}^m_{\rm ref}{\mathbf Y}^m_{J_1},Y^m_{J_1}\right)_{\omega_{h\widehat{1}}}=0.
\]
Owing to Lemma \ref{ln} we get
\[
\frac{\hbar}{\tau}
\left(\overline{s}_{N\widehat{1}}Y^m,Y^m\right)_{\overline{\omega}_{hN1}\times\omega_{h\widehat{1}}}
\leq 0,
\]
and finally the above positive definiteness of \eqref{m2} and $\overline{s}_{N\widehat{1}}$ implies that $Y^m=0$.
\par Bound (\ref{o1}) follows directly from the previous $L^2$-stability bound (\ref{l2}) in the case of the infinite space mesh since now $\Psi^0_h=0$ and $F^m=0$ on $\omega_{h,\infty}\backslash \omega_h$ for any $m\geq 1$.
\end{proof}
\par For $F=0$, bound (\ref{o1}) means that
$\|\Psi^m\|_{\widetilde\omega_h}^2\leq  \|\Psi^0_{h}\|_{\widetilde\omega_h}^2$ for any $m\geq 1$.
\par Note that, in order to prove uniqueness of the solution, we have crucially exploited a very particular case of inequality (\ref{nn1}) (see also (\ref{nn2})), namely
\[
0\leq\Ima
\left(
{\mathcal F}_2^{-1}\dots{\mathcal F}_n^{-1}
\left[
\sigma^{(2)}_{q_2}\dots \sigma^{(n)}_{q_n}
R_{\mathbf q}^0 Y_{J_1}^{m{\mathbf q}}
\right],Y^m_{J_1}\right)_{\omega_{h\widehat{1}}}
\]
\[
=\frac{X_2\dots X_n}{2^{n-1}}
\sum_{q_2=1}^{J_2-1}\ldots\sum_{q_n=1}^{J_n-1}
\sigma^{(2)}_{q_2}\dots \sigma^{(n)}_{q_n}
\Ima R_{\mathbf q}^0 \left|Y^{m{\mathbf q}}_{J_1}\right|^2,
\]
for any $Y^m_{J_1}$: $\omega_{h\widehat{1}}\to{\mathbb C}$, which is equivalent to the inequality
$\Ima R_{\mathbf q}^0\geq 0$ for any $\mathbf q$.
\par The splitting scheme on the finite space mesh (\ref{l1})-(\ref{l6}) can be effectively implemented (similarly to \cite{DZZ13,ZR13}). Applying the operator ${\mathcal F}_2\ldots{\mathcal F}_n$ to the main equation (\ref{l2}) and the discrete TBC (\ref{l5}) and dividing the results by $\sigma^{(2)}_{q_2}\dots \sigma^{(n)}_{q_n}$, we get a collection of independent 1D problems in $x_1$, for each $\widetilde{\Psi}^{m{\mathbf q}}$
\begin{equation}
 i\hbar s_{N1}\,\frac{\widetilde{\Psi}^{m{\mathbf q}}-\breve{\Psi}^{m{\mathbf q}}}{\tau}
 =-c_{\hbar}\partial_1\bar{\partial}_1\frac{\widetilde{\Psi}^{m{\mathbf q}}+\breve{\Psi}^{m{\mathbf q}}}{2}
+s_{N1}\Bigl(\widetilde{V}_{\mathbf q}\frac{\widetilde{\Psi}^{m{\mathbf q}}+\breve{\Psi}^{m{\mathbf q}}}{2}\Bigr)
+\frac{F^{m{\mathbf q}}}{\sigma^{(2)}_{q_2}\dots \sigma^{(n)}_{q_n}}\ \ \mbox{on}\ \ \omega_{h1},
\label{r1}
\end{equation}
\begin{equation}
\left.   \widetilde{\Psi}^{m{\mathbf q}}\right|_{j_1=0}=0,
\label{r2}
\end{equation}
\begin{equation}
\Bigl.
\Bigl[
c_{\hbar}\bar{\partial}_1\,\frac{\widetilde{\Psi}^{m{\mathbf q}}
+\breve{\Psi}^{m{\mathbf q}}}{2}
-h_1s_{N1}^-
\Bigl(
i\hbar\,\frac{\widetilde{\Psi}^{m{\mathbf q}}-\breve{\Psi}^{m{\mathbf q}}}{\tau}
-V_{\infty,{\mathbf q}}\,\frac{\breve{\Psi}^{m{\mathbf q}}
+\widetilde{\Psi}^{m{\mathbf q}}}{2}
\Bigr)
\Bigr]
\Bigr|_{j_1=J_1}
=c_{\hbar}
\Bigl(
R_{\mathbf q} *\widetilde{\mathbf \Psi}^{\mathbf q}_{J_1}
\Bigr)^m,
\label{r3}
\end{equation}
where $\widetilde{V}_{\mathbf q}:=\widetilde{V}+c_{\hbar}\Bigl(
\frac{\lambda^{(2)}_{q_2}}{\sigma^{(2)}_{q_2}}+\dots +\frac{\lambda^{(n)}_{q_n}}{\sigma^{(n)}_{q_n}}
\Bigr)$ and we have taken into account (\ref{l1}) and (\ref{l7}).
\par Given $\Psi^{m-1}$, the direct algorithm for computing $\Psi^m$ is divided into five steps.
\begin{enumerate}
\item To compute $\breve{\Psi}^m={\mathcal E}^m\Psi^{m-1}$ on $\omega_h\cup\Gamma_{1h}$ (see (\ref{e6})).
\item To compute
$\breve{\Psi}^{m{\mathbf q}}
=\bigl(
{\mathcal F}_n\dots
\bigl(
{\mathcal F}_2 \breve{\Psi}^m
\bigr)^{(q_2)}
\dots
\bigr)^{(q_n)}$ and
$F^{m{\mathbf q}}=\bigl({\mathcal F}_n\dots\bigl({\mathcal F}_2 F^m\bigr)^{(q_2)}\dots\bigr)^{(q_n)}$,
for $1\leq q_2\leq J_2-1,\dots,1\leq q_n\leq J_n-1$.
\item  To compute
$\widetilde{\Psi}^{m{\mathbf q}}$ by solving the independent 1D problems (\ref{r1})-(\ref{r3})
for $1\leq q_2\leq J_2-1,\dots,1\leq q_n\leq J_n-1$ (this includes the computation of the discrete convolutions on the right
of (\ref{r3}) so that
$\widetilde{\Psi}_{J_1}^{1{\mathbf q}},\dots,\widetilde{\Psi}_{J_1}^{m-1\, {\mathbf q}}$ have to be stored).
\item  To compute
$\widetilde{\Psi}^m={\mathcal F}_n^{-1}\dots {\mathcal F}_2^{-1} \widetilde{\Psi}^{m{\mathbf q}}$.
\item To compute
$\Psi^m={\mathcal E}^m\widetilde{\Psi}^m$ on $\omega_h\cup\Gamma_{1h}$ (see (\ref{e6})).
\end{enumerate}

Steps 1 and 5 need $O\left(J_1\dots J_n\right)$ arithmetic operations while
Steps 2 and 4 require $O\left(J_1\dots J_n\log_2\left(J_2\dots J_n\right)\right)$ operations by using FFT provided that
$J_2=2^{k_2},\dots,J_n=2^{k_n}$, where $k_2,\dots,k_n$ are integers.
Step 3 needs $O((J_1+m)J_2$ $\dots J_n)$ operations.
\par The total amount of arithmetic operations equals
$O\left((J_1\log_2\left(J_2\dots J_n)+m\right)J_2\dots J_n\right)$ or
$O(\left(J_1\log_2\left(J_2\dots J_n\right)+M\right)J_2\dots J_n M)$
in order to compute the solution $\Psi^m$ respectively at time level $m$ or
at all time levels $m=1,\dots,M$.
\par Notice that the above analysis is easily extended to the case of the problem in a parallelepiped infinite in $x_1$ in both directions, with setting the discrete TBC at the left artificial boundary $x_1=0$ as well. Its form similar to (\ref{r3}) is as follows
\begin{equation}
\Bigl.
\Bigl[
-c_{\hbar}{\partial}_1\,\frac{\widetilde{\Psi}^{m{\mathbf q}}
+\breve{\Psi}^{m{\mathbf q}}}{2}
-h_1s_{N1}^+
\Bigl(
i\hbar\,\frac{\widetilde{\Psi}^{m{\mathbf q}}-\breve{\Psi}^{m{\mathbf q}}}{\tau}
-V_{\infty,{\mathbf q}}\,\frac{\breve{\Psi}^{m{\mathbf q}}
+\widetilde{\Psi}^{m{\mathbf q}}}{2}
\Bigr)
\Bigr]
\Bigr|_{j_1=0}
=c_{\hbar}
\Bigl(
R_{\mathbf q} *\widetilde{\mathbf \Psi}^{\mathbf q}_{0}
\Bigr)^m,
\label{r3m}
\end{equation}
for any $m\geq 1$ and ${\mathbf q}$ (for brevity, we suppose that $V(x)=V_\infty$ also for $x_1\leq h_1$ though clearly $V_{\pm\infty}$ could be different). Here
$\widetilde{\mathbf \Psi}_{0}=\{\widetilde{\Psi}^0|_{j_1=0},\dots,\widetilde{\Psi}^m|_{j_1=0}\}$.
\section{Numerical experiments}
\setcounter{equation}{0}
\setcounter{proposition}{0}
\setcounter{theorem}{0}
\setcounter{lemma}{0}
\setcounter{corollary}{0}
\setcounter{remark}{0}
The above presented direct algorithm has been implemented for $n=2$.
We solve the initial-boundary value problem in the infinite strip $\mathbb{R}\times (0,X_2)$ taking the computational domain $\Pi_X\times [0,T]$, with $\Pi_X=[0,X_1]\times [0,X_2]$, and set $\hbar=1$ and $c_\hbar=1$.
\par We respectively modify our scheme  (\ref{l1})-(\ref{l6}) enlarging
$\omega_h\cup \Gamma_{1h}$ by $\Gamma_{0h}:=\{0\}\times\omega_{h\widehat{1}}$ in \eqref{l1} and \eqref{l3} as well as
replacing $\Gamma_h$ by $\Gamma_h\setminus\Gamma_{0h}$ in \eqref{l4}
and posing the left discrete TBC \eqref{r3m}.
We can put $\widetilde{V}=0$ and $\Delta V=V$.\par Let the initial function be the standard Gaussian wave package
\[
 \psi^0(x)=
 \psi_G:=
 \exp\Bigl\{ ik(x_1-x_1^{(0)})-\frac{|x-x^{(0)}|^2}
 {4\alpha}\Bigr\}\ \ \text{on}\ \ \mathbb{R}^2.
\]
We set its parameters $k=30\sqrt{2}$ (the wave number), $\alpha=\frac{1}{120}$ and $x^{(0)}=(1,\frac{X_2}{2})$
like in \cite{DZZ13,ZR13}.
The modulus and the real part of $\psi_G$ can be seen on Figure \ref{B:Solution1}, for $m=0$.
\smallskip\par\textbf{Example A.}
We first consider a modified P\"{o}schl-Teller \cite{F71} potential-barrier
\[
 V(x)=\frac{\alpha_0^2 c_1}{\cosh^2 \alpha_0(x_1-x_1^*)}
\]
depending only on $x_1$ and set $\alpha_0=6$, $c_1=47$ and $x_1^*=2$.
Though the potential is smooth, its derivatives in $x_1$ are rather large.
Let $(X_1,X_2)=(4,4.2)$, then both $V$ and $\psi_G$ are sufficiently small outside $\Pi_X$, and let also $T=t_M=0.05$.
\par This example was solved using the Numerov-Crank-Nicolson scheme with the same Strang splitting in potential in \cite{ZR13} on various meshes. The wave package is divided by the barrier into two comparable reflected and transmitted parts moving in opposite $x_1$-directions and leaving the computational domain.
In particular, it was found that values $(J_1,J_2,M)=(400,64,1000)$, i.e.
$h_1=10^{-2}$, $h_2\approx6.56\cdot10^{-2}$ and $\tau=\frac{T}{M}=5\cdot 10^{-5}$, are suitable to build correct graphs of the solution;
finer meshes allowed to compute much more precise numerical solutions.
\par Here we study the difference between the numerical solutions of the Numerov-Crank-Nicolson-Strang scheme and the above one.
In Tables \ref{Table1} and \ref{Table2}, we present their maximum in time $C$ and $L^2$ space norms $E_C$ and $E_{L^2}$ on refining space meshes together with the corresponding ratios $R_C$ and $R_{L^2}$.
Notice that the difference is estimated theoretically as $O(h_1^2h_2^2)$. So it is natural that $R_C$ and $R_{L^2}$ are rather close to 4 for redoubling $J_1$ or $J_2$ and to 16 for redoubling $(J_1,J_2)$.
\begin{table}
\begin{center}
\begin{tabular}{|r|c|c|c|c|c|}
  \hline
  $J_1$ & $E_C$  & $E_{L^2}$  & $R_C$     & $R_{L^2}$ \\
  \hline
  200  & $0.340$$E$$-2$  &$0.330$$E$$-2$    & --        & --      \\
  400  & $0.851$$E$$-3$  &$0.810$$E$$-3$    & 3.99    & 4.07  \\
  800  & $0.213$$E$$-3$  &$0.202$$E$$-3$    & 3.99    & 4.01  \\
  1600 & $0.531$$E$$-4$  &$0.505$$E$$-4$    & 4.01    & 4.00  \\
  \hline
\end{tabular}
\end{center}
\begin{center}
\begin{tabular}{|r|c|c|c|c|c|}
  \hline
  $J_2$ & $E_C$  & $E_{L^2}$  & $R_C$     & $R_{L^2}$ \\
  \hline
  32  & $0.410$$E$$-2$  &$0.310$$E$$-2$    & --      & --                \\
  64  & $0.853$$E$$-3$  &$0.807$$E$$-3$    & 4.81    & 3.84            \\
  128 & $0.213$$E$$-3$ & $0.202$$E$$-3$    & 4.00    & 3.99            \\
  256 & $0.531$$E$$-4$ & $0.506$$E$$-4$    & 4.01    & 3.99  \\
  \hline
\end{tabular}
\end{center}
\caption{The difference between the solutions of two schemes in maximum in time $C$ and $L^2$ space norms for redoubling
$J_1$ and $(J_2,M)= (128,1000)$,
or redoubling $J_2$ and $(J_1,M)= (800,1000)$.}
\label{Table1}
\end{table}
\begin{table}
\begin{center}
\begin{tabular}{|c|c|c|c|c|c|}
  \hline
  $(J_1,J_2)$ & $E_C$  & $E_{L^2}$  & $R_C$     & $R_{L^2}$ \\
  \hline
  $(400,64)$   & $0.340$$E$$-2$ & $0.320$$E$$-2$    & --      & --          \\
  $(800,128)$  & $0.213$$E$$-3$ & $0.202$$E$$-3$    & 15.96   & 15.84       \\
  $(1600,256)$ & $0.193$$E$$-4$ & $0.129$$E$$-4$    & 11.03   & 15.66       \\
  \hline
\end{tabular}
\end{center}
\caption{Example A. The difference between the solutions of two schemes in maximum in time $C$ and $L^2$ space norms for redoubling $(J_1,J_2)$ and $M=1000$.}
\label{Table2}
\end{table}
\par The typical graphs in time of the absolute and relative differences in $C$ and $L^2$ norms between the numerical solutions of two schemes are given on Figure \ref{A:Errors} for $(J,K,M)=(800,128,1000)$.
\begin{figure}[ht]
\begin{multicols}{2}
    \includegraphics[scale=0.4]{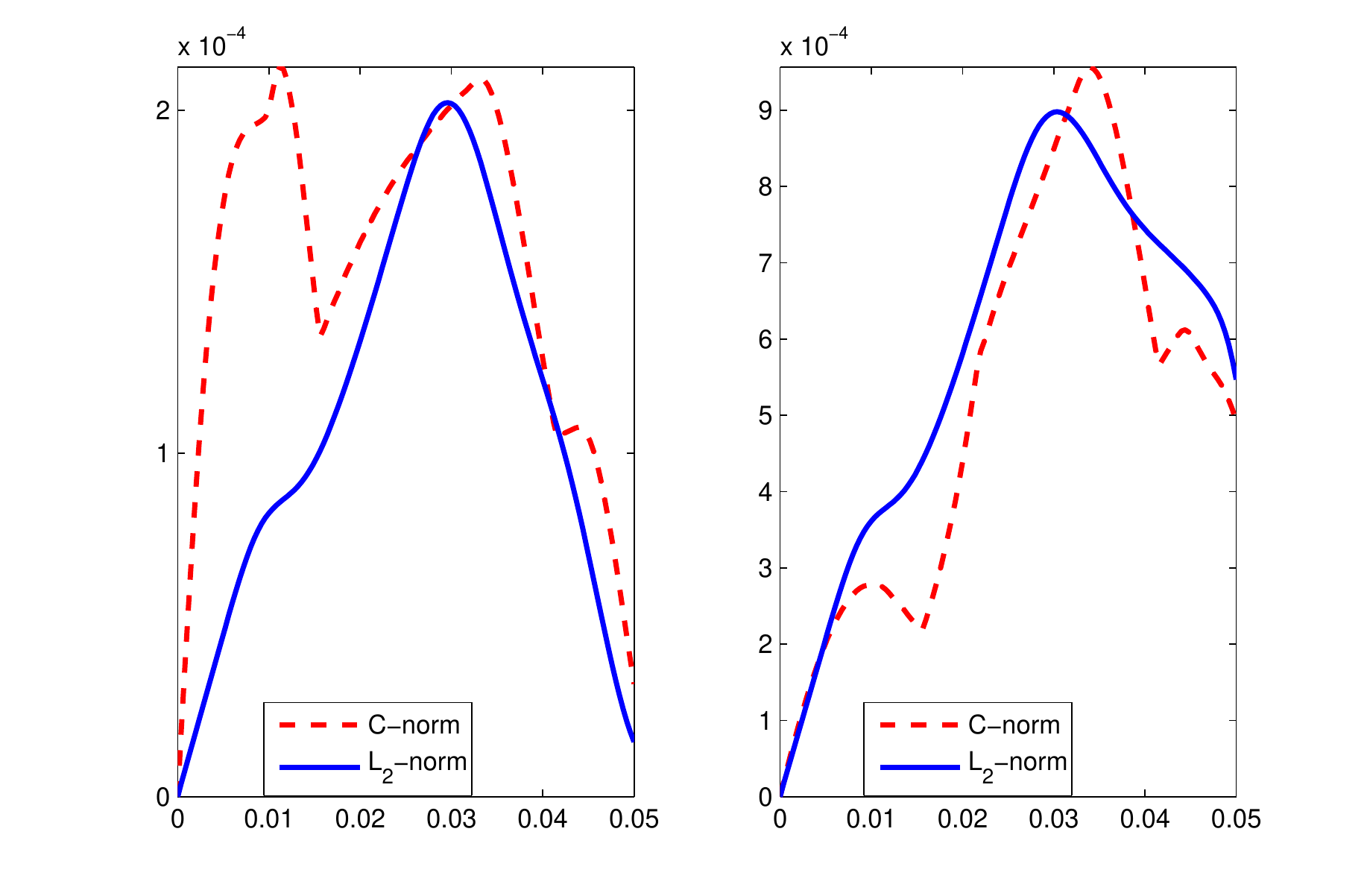}\vspace{0cm}\\
    \caption{\small{Example A.
The absolute (left) and relative (right) differences in $C$ and $L^2$ norms between the solutions of two schemes for
$(J_1,J_2,M)=(800,128,1000)$ in dependence with time}}
\label{A:Errors}
    \includegraphics[scale=0.4]{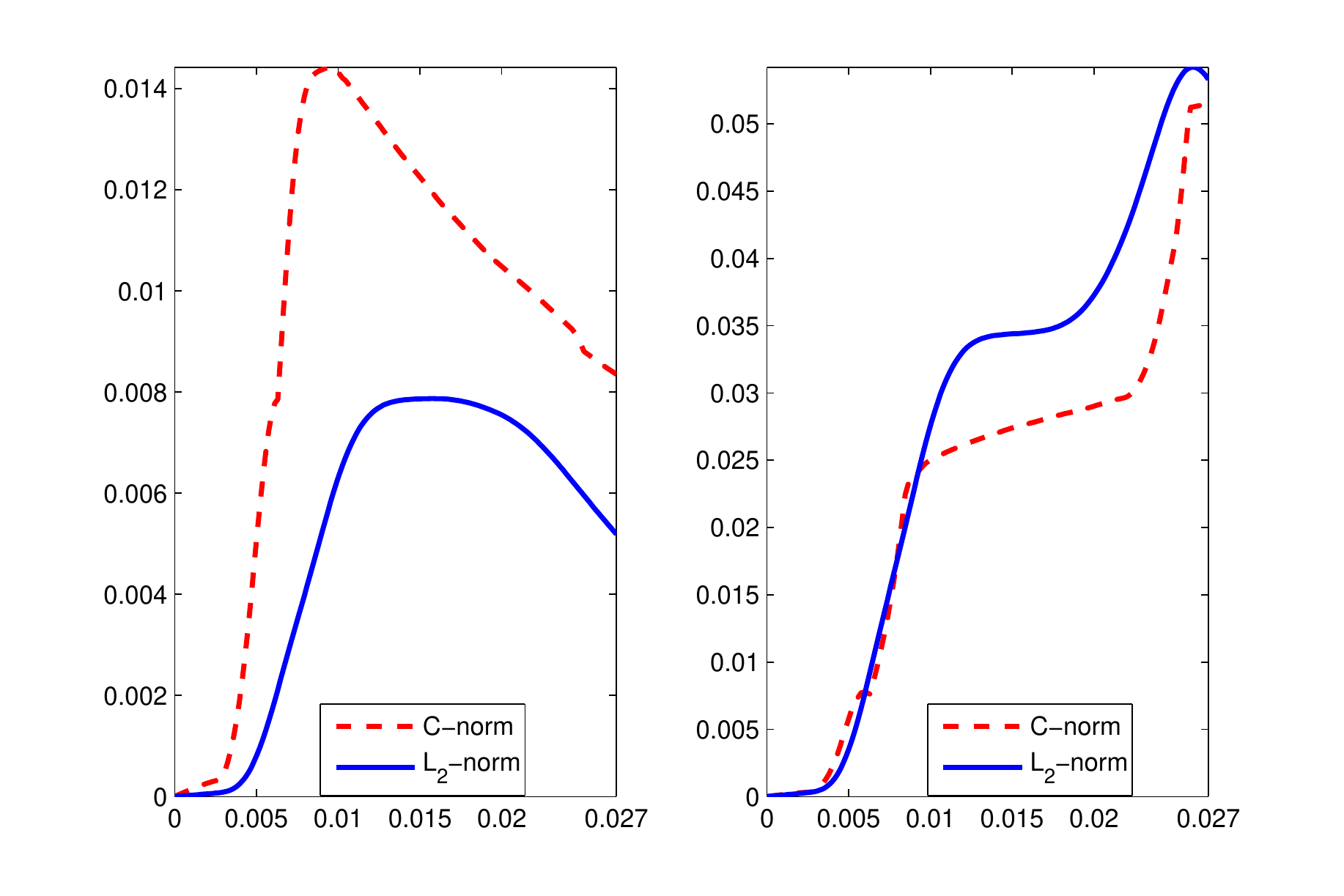}\vspace{0cm}\\
\caption{\small{Example B.
The absolute (left) and relative (right) differences in $C$ and $L^2$ norms between the numerical solutions for
$(J_1,J_2,M)=(600,64,2400)$ and $(1200,128,4800)$ in dependence with time}}
\label{B:Errors}
\end{multicols}
\end{figure}
\smallskip\par\textbf{Example B.}
Following \cite{DZZ13,ZR13}, we second consider the rectangular potential
\[
 V(x)=
 \begin{cases}
 Q &\text{for}\ \ x\in \Pi:=(a,b)\times (c,d)\\[1mm]
 0 &\text{otherwise}
\end{cases}
\]
depending both on $x_1$ and $x_2$.
We set $\Pi=(1.6,1.9)\times (0.7,2.1)$ and $Q=-9000$ so now the potential is a well (in contrast to \cite{DZZ13,ZR13}).
We choose $(X_1,X_2)=(3,2.8)$ so that $\Pi\subset \Pi_X$ and $\psi_G$ is small outside $\Pi_X$. Let also $T=t_M=0.027$. This example is more complicated since the well is discontinuous and thus the corresponding exact solution is non-smooth.
\par We take $(J_1,J_2)$ such that the vertices of $\Pi$ belong to the mesh and, following \cite{ZR13}, exploit the averaged mesh potential
\begin{gather*}
 V_{hj_1,j_2}=\begin{cases}
 V(x_{{\bf j}})   &\text{for}\ \ j_1h_1\neq a,b\ \ \text{and}\ \ j_2h_2\neq c,d\\[1mm]
 Q/2 &\text{for}\ \ j_1h_1=a,b\ \ \text{but}\ \ j_2h_2\neq c,d,
     \ \text{or for}\ \ j_2h_2=c,d\ \ \text{but}\ \ j_1h_1\neq a,b\\[1mm]
 Q/4 &\text{for}\ \ (j_1h_1,j_2h_2)=(a,c),(a,d),(b,c),(b,d)
\end{cases}
\end{gather*}
for any $j_1$ and $j_2$.
\par The numerical solution $\Psi^{m}$ is computed for $(J_1,J_2,M)=(600,64,2400)$, i.e.,
$h_1=5\cdot10^{-3}$, $h_2=4.375\cdot10^{-2}$ and $\tau=1.125\cdot 10^{-5}$.
We check that these values are suitable by computing the change in the solution when redoubling $(J_1,J_2,M)$, see Figure \ref{B:Errors}.
The modulus and the real part of $\Psi^{m}$ together with the normalized well are presented on Figures \ref{B:Solution1} and \ref{B:Solution2}, for some selected time levels.
Once again the wave package is divided (now by the well) into the reflected and transmitted parts,
but now the process is more tricky and the reflected part consists in two fragments. Notice (as usual) the more complicated behavior of the real part and the complete absence of the spurious reflections from the artificial left and right boundaries where the discrete TBCs are posed.
\begin{figure}[ht]
\begin{multicols}{2}
    \includegraphics[scale=0.5]{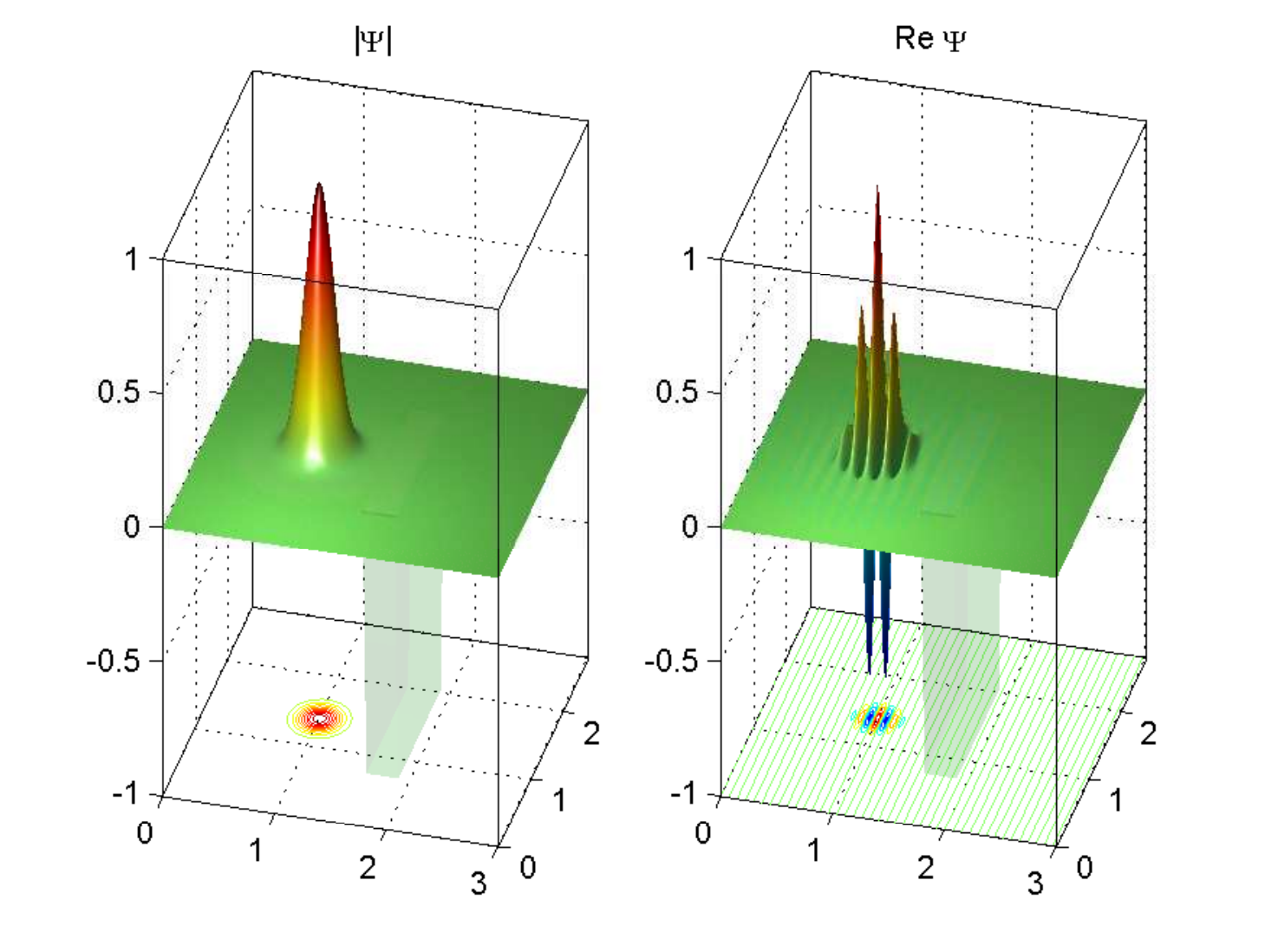}\vspace{0cm}\\
    \centerline{\small{$m=0$}}\\
    \includegraphics[scale=0.5]{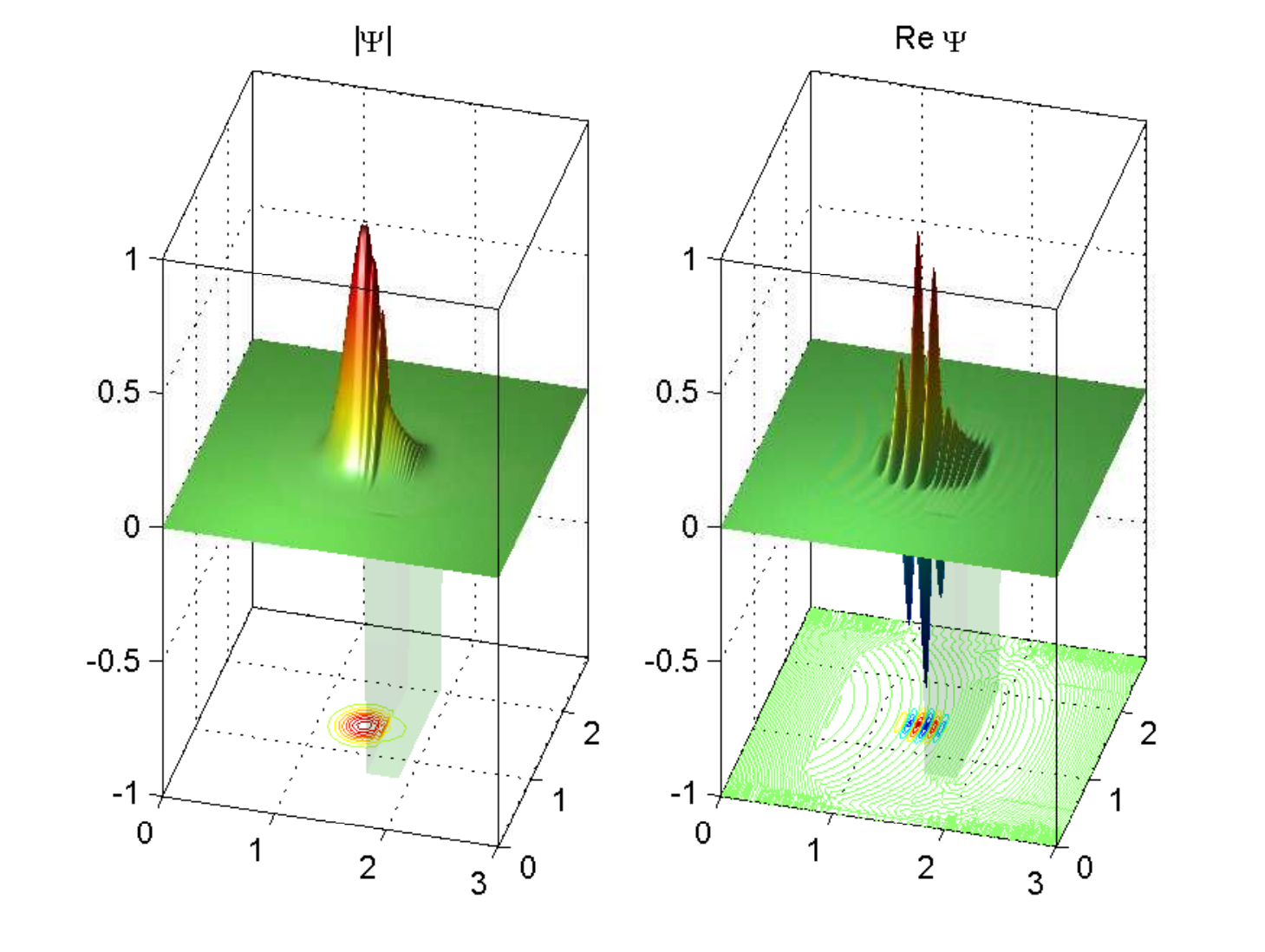}\vspace{0cm}\\
    \centerline{\small{$m=416$}}\\
   \end{multicols}
\begin{multicols}{2}
    \includegraphics[scale=0.5]{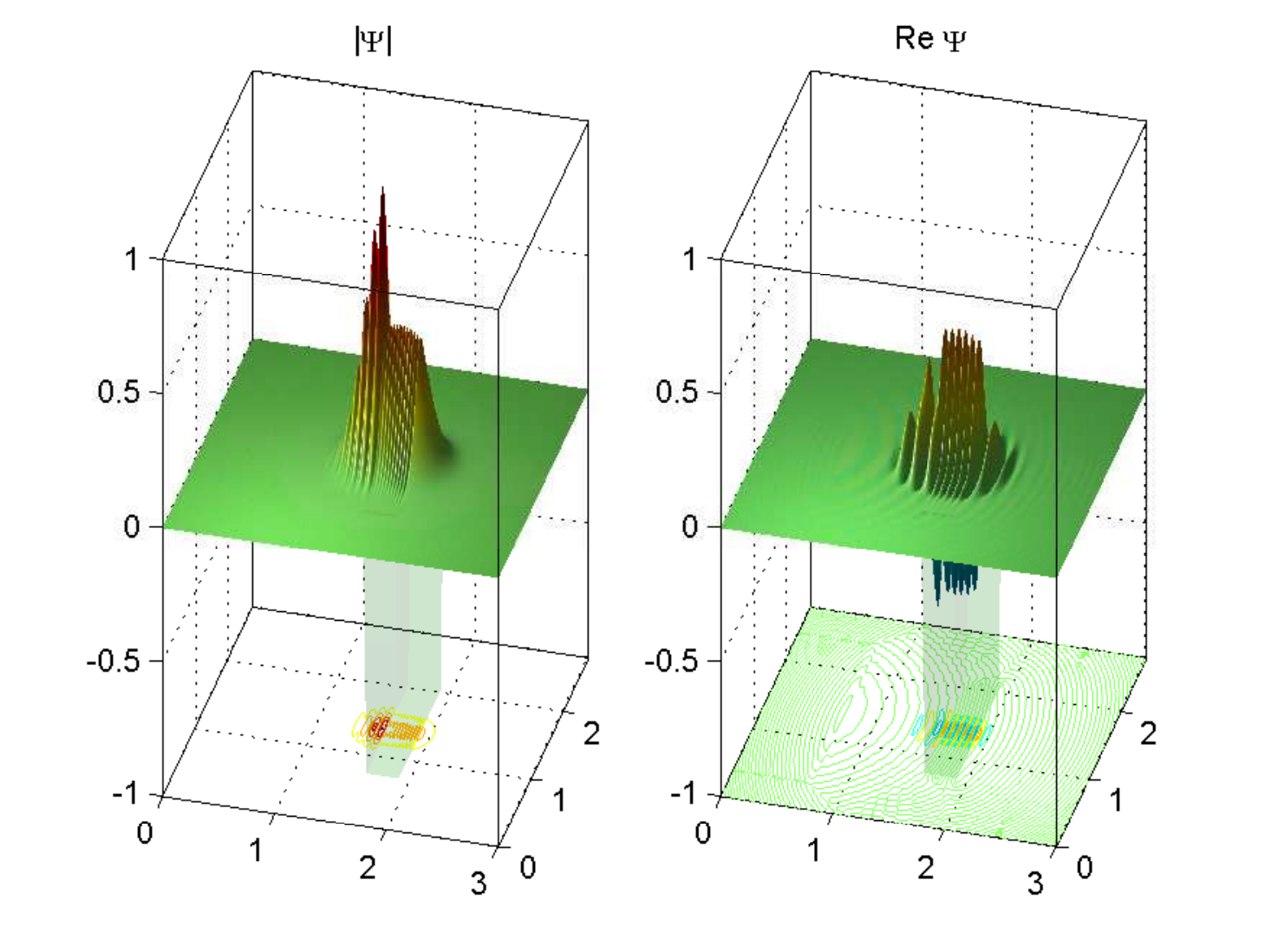}\vspace{0cm}\\%
    \centerline{\small{$m=616$}}\\
    \includegraphics[scale=0.5]{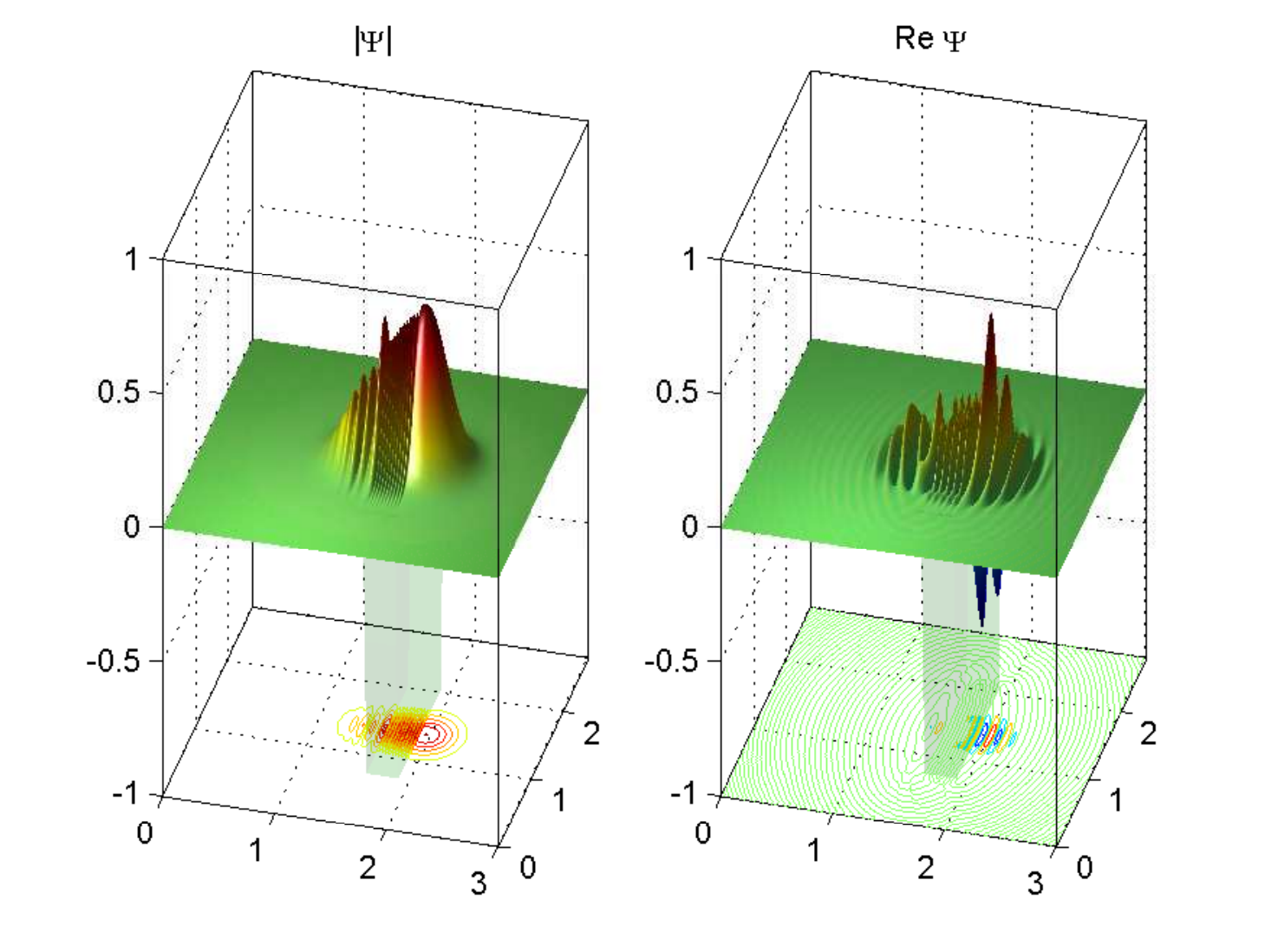}\vspace{0cm}\\%
     \centerline{\small{$m=818$}}\\
\end{multicols}
\caption{\small{Example B. The modulus and the real part of the numerical solution $\Psi^m$, $m=0, 416, 616$ and $818$}}
\label{B:Solution1}
\end{figure}

\begin{figure}[ht]
\begin{multicols}{2}
\includegraphics[scale=0.5]{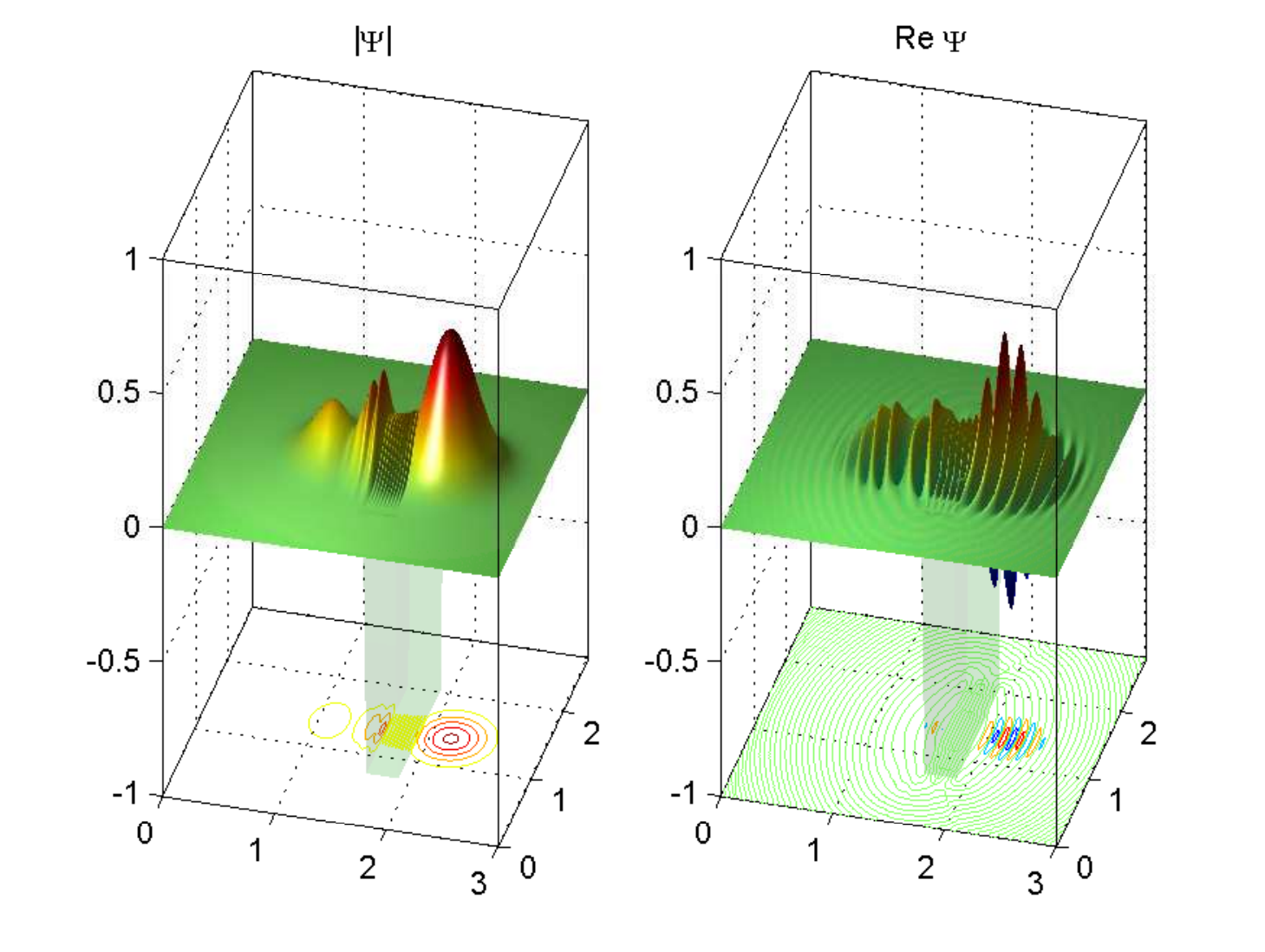}\vspace{0cm}\\%
     \centerline{\small{$m=1056$}}\\
    \includegraphics[scale=0.5]{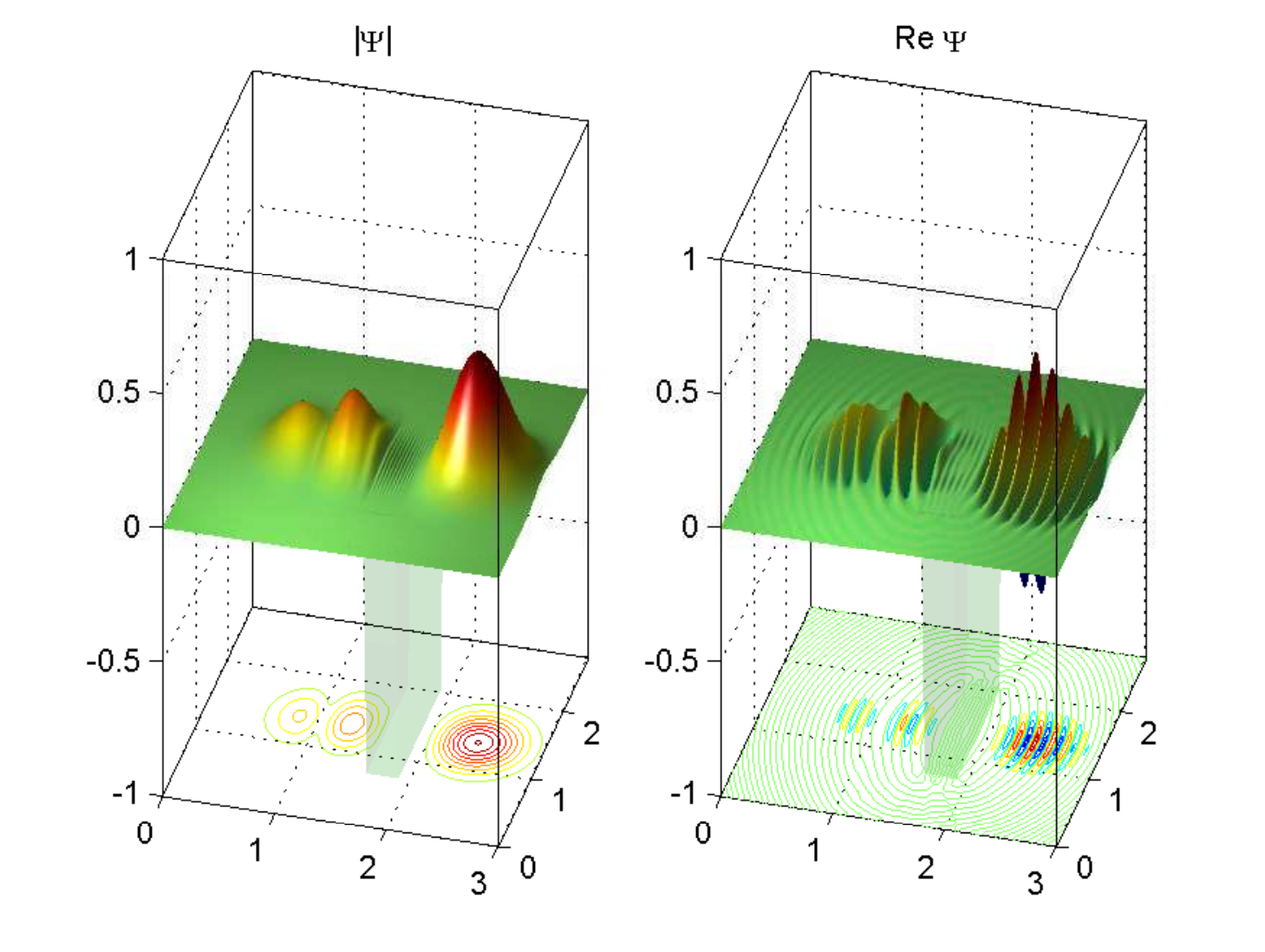}\vspace{0cm}\\%
     \centerline{\small{$m=1318$}}\\
\end{multicols}

\begin{multicols}{2}
    \includegraphics[scale=0.5]{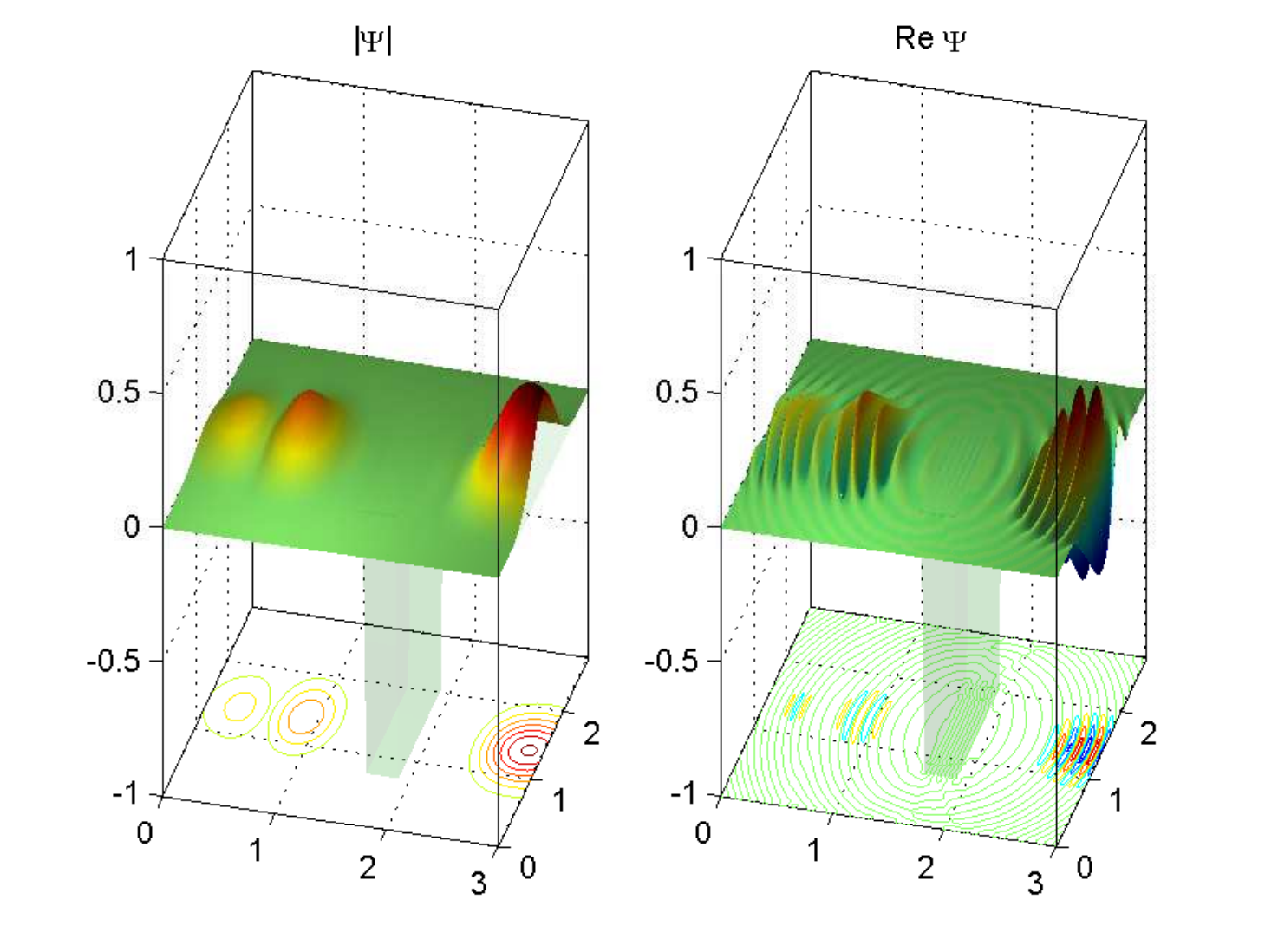}\vspace{0cm}\\%
     \centerline{\small{$m=1800$}}\\
    \includegraphics[scale=0.5]{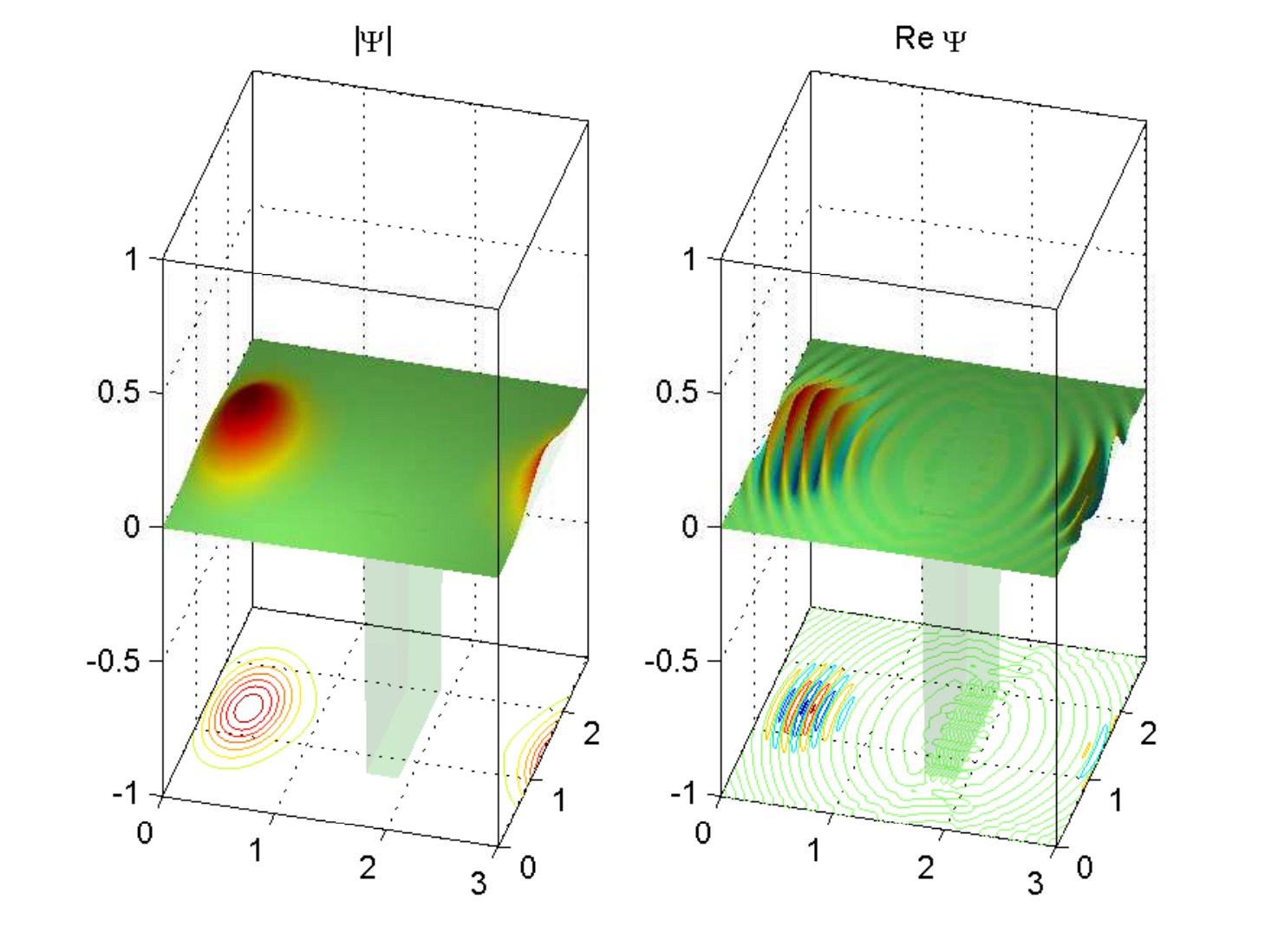}\vspace{0cm}\\%
     \centerline{\small{$m=2400$}}\\
\end{multicols}
\caption{\small{Example B. The modulus and the real part of the numerical solution $\Psi^m$, $m=1056, 1318, 1800$ and $2400$}
\label{B:Solution2}}
\end{figure}
\par On the last Figure \ref{energies}, the graphs of the total kinetic and potential energies are presented. Here we calculate them as
\[
 E_{kin}:=c_\hbar\Bigl(\|\bar{\partial}_1\Psi\|_{\widetilde{\omega}_h}^2
 +\sum_{j_1=1}^{J_1}\sum_{j_2=1}^{J_2}|\bar{\partial}_2\Psi_{j_1,j_2}|^2h_1h_2\Bigr)\ \ \text{for}\ \ n=2,\ \
 E_{pot}:=(V\Psi,\Psi)_{\widetilde{\omega}_h}.
\]
The left and the right graphs correspond respectively to Example B and the related example from \cite{DZZ13,ZR13} for the rectangular \textit{barrier} with $\Pi=(1.6,1.7)\times (0.7,2.1)$ and $Q=1500$.
Their behavior is in complete accordance with the physical sense of the examples.
\begin{figure}[ht]
\begin{multicols}{2}
    \includegraphics[scale=0.4]{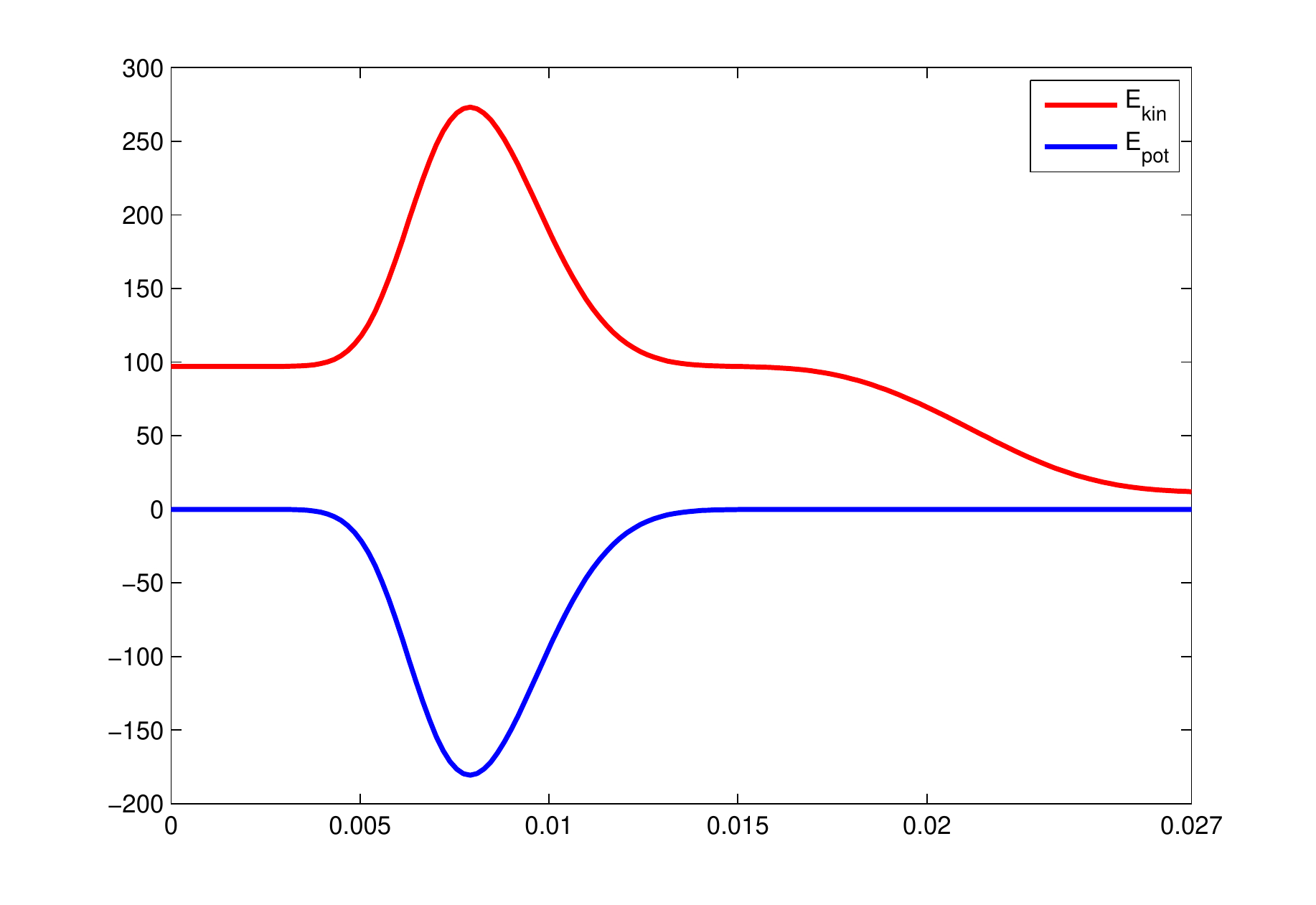}\vspace{0cm}\\%
    \includegraphics[scale=0.4]{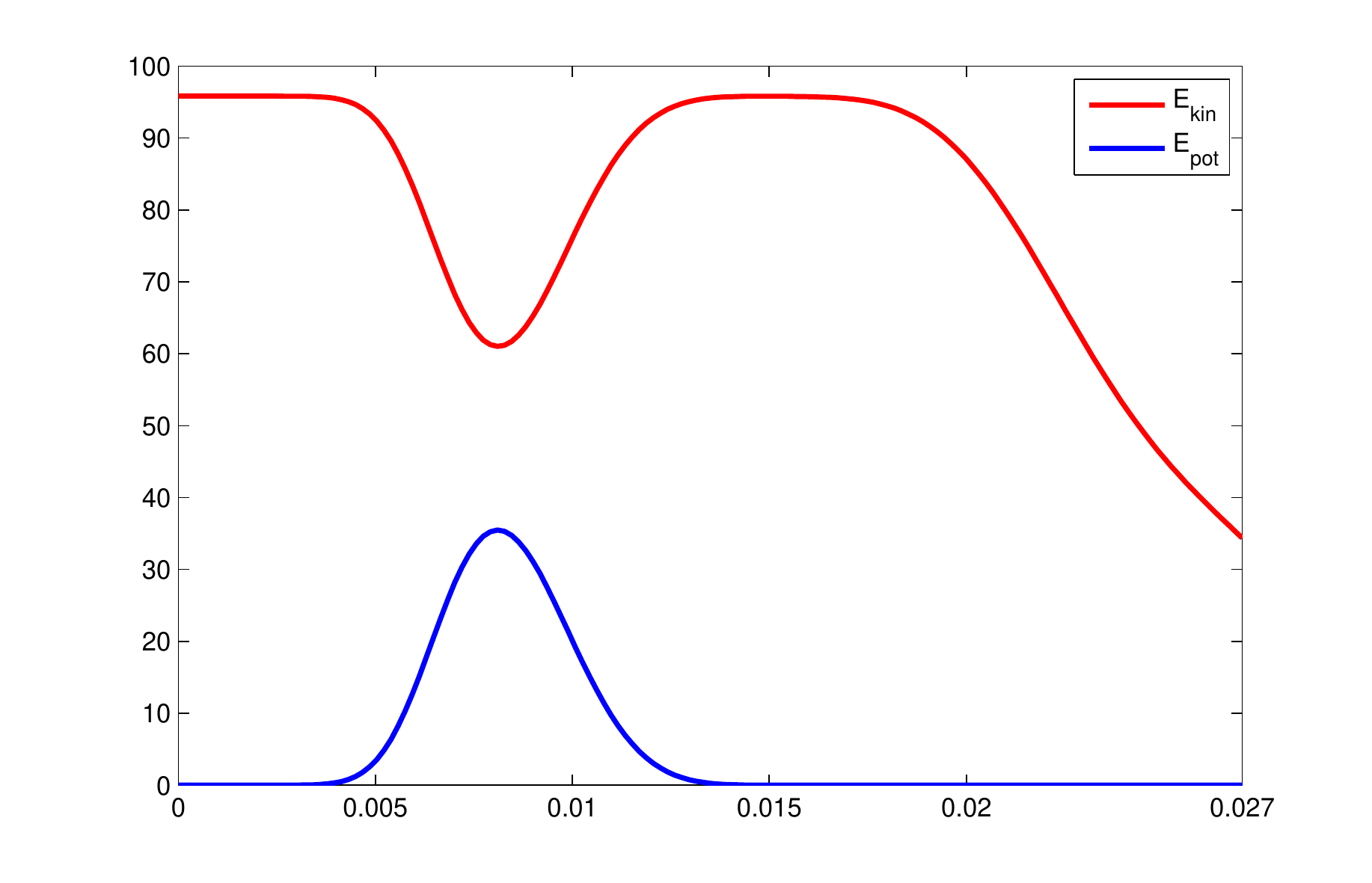}\vspace{0cm}\\%
\end{multicols}
\caption{\small{Example B. The total kinetic and potential energies in the cases of the well (left) and the barrier (right) in time}}
\label{energies}
\end{figure}
\bigskip\par
\textbf{Acknowledgments}
\smallskip\par
The paper has been initiated during the visit of A. Zlotnik in summer 2012 to the the D\'epar\-te\-ment de Physique
Th\'eorique et Appliqu\'ee, CEA/DAM/DIF Ile de France (Arpajon), which he thanks for hospitality.
The study is carried out by A. Zlotnik and A. Romanova
within The National Research University Higher School of Economics' Academic Fund Program, project No. 13-09-0124
and is also supported by the Russian Foundation for Basic Research, project  No. 12-01-90008-Bel.


\begin{thebibliography}{99}
\bibitem{AABES08}
X. Antoine, A. Arnold, C. Besse, M. Ehrhardt and A. Sch\"adle,
A review of transparent and artificial boundary conditions techniques for linear and nonlinear Schr\"odinger equations.
{\sl Commun. Comp. Phys.} {\bf 4} (4) (2008) 729-796.
\bibitem{ABM04}
X. Antoine, C. Besse and V. Mouysset,
Numerical schemes for the simulation of the two-dimensional Schr\"o\-dinger equation
using non-reflecting boundary conditions.
{\sl Math. Comp.} {\bf 73} (2004) 1779-1999.
\bibitem{AES03}
A. Arnold, M. Ehrhardt and I. Sofronov,
 Discrete transparent boundary conditions for the Schr\"o\-dinger equation:
fast calculations, approximation and stability.
{\sl Comm. Math. Sci.} {\bf 1} (2003) 501-556.
\bibitem{BM00}
S. Blanes and P.C. Moan,
Splitting methods for the time-dependent Schro¨dinger equation.
{\sl Phys. Lett. A} {\bf 265} (2000) 35-42.
\bibitem{CMR13}
R. \v{C}iegis, A. Mirinavi\v{c}ius and M. Radziunas,
Comparison of split step solvers for multidimensional Schr\"o\-dinger problems.
{\sl Comput. Meth. Appl. Math.} {\bf 13} (2) (2013) 237-250.
\bibitem{DiM97}
L. Di Menza,
Transparent and absorbing boundary conditions for the Schr\"odinger equation
in a bounded domain.
{\sl Numer. Funct. Anal. and Optimiz.} {\bf 18} (1997) 759-775.
\bibitem{DZ06}
B. Ducomet and A. Zlotnik,
On stability of the Crank-Nicolson scheme with approximate transparent boundary conditions for the Schr\"odinger equation. Part I.
{\sl Comm. Math. Sci.} {\bf 4} (2006) 741-766.
\bibitem{DZ07}
B. Ducomet and A. Zlotnik,
On stability of the Crank-Nicolson scheme with approximate transparent boundary conditions for the Schr\"odinger equation. Part II.
{\sl Comm. Math. Sci.} {\bf 5} (2007) 267-298.
\bibitem{DZZ09}
 B. Ducomet, A. Zlotnik and I. Zlotnik,
On a family of finite-difference schemes with discrete transparent boundary conditions for a generalized 1D Schr\"odinger equation,
{\sl Kinetic and Related Models}, {\bf 2} (2009), 151-179.
\bibitem{DZZ13}
B. Ducomet, A. Zlotnik and I. Zlotnik,
The splitting in potential Crank-Nicolson scheme with discrete transparent boundary conditions for the Schr\"odinger equation on a semi-infinite strip.
{\sl ESAIM: M2AN} (submitted).
See also http://arxiv.org/abs/1303.3471.
\bibitem{EA01}
M. Ehrhardt and A. Arnold,
Discrete transparent boundary conditions for the Schr\"odinger equation.
{\sl Riv. Mat. Univ. Parma} {\bf 6} (2001) 57-108.
\bibitem{F71}
S. Flugge, Practical quantum mechanics. Vol. I. Springer: Berlin, 1971.
\bibitem{GX11}
Z. Gao and S. Xie, Fourth-order alternating direction implicit compact finite difference schemes for two-dimensional Schr\"odinger equations.
{\sl Appl. Numer. Math.} {\bf 61} (2011) 593-614.
\bibitem{G11}
L. Gauckler, Convergence of a split-step Hermite method for Gross-Pitaevskii equation.
{\sl IMA J. Numer. Anal.} {\bf 31} (2011) 396-415.
\bibitem{L08}
C. Lubich, From quantum to classical molecular dynamics. Reduced models and numerical analysis. EMS: Z\"{u}rich, 2008.
\bibitem{Sch02a}
A. Sch\"adle, Non-reflecting boundary conditions for the two-dimensional Schr\"o\-dinger equation.
{\sl Wave Motion} {\bf 35} (2002) 181-188.
\bibitem{SA08}
M. Schulte and A. Arnold,
Discrete transparent boundary conditions for the Schr\"o\-dinger equation, a compact higher order scheme.
{\sl Kinetic and Related Models} {\bf 1} (1) (2008) 101-125.
\bibitem{SZ04}
J. Szeftel, Design of absorbing boundary conditions for Schr\"o\-dinger equations in
$\mathbb{R}^d$.
{\sl SIAM J. Numer. Anal.} {\bf 42} 2004 (4) 1527-1551.
\bibitem{TY10}
Z.F. Tian and P.X. Yua, High-order compact ADI (HOC-ADI) method for solving unsteady 2D Schrodinger equation.
{\sl Comput. Phys. Commun.} {\bf 181} (2010) (5) 861-868.
\bibitem{ZR13}
A. Zlotnik and A. Romanova,
A Numerov-Crank-Nicolson-Strang scheme with discrete transparent boundary conditions for the Schr\"odinger equation on a semi-infinite strip. {\sl Appl. Numer. Math.}
(submitted).
\bibitem{ZZ11}
A.A. Zlotnik and I.A. Zlotnik, Family of finite-difference schemes with transparent boundary conditions for the nonstationary Schr\"o\-dinger equation in a semi-infinite strip. {\sl Dokl. Math.} {\bf 83} (1) (2011) 12-18.
\bibitem{IZ11}
I.A. Zlotnik,
Family of finite-difference schemes with approximate transparent boundary conditions for the generalized nonstationary Schr\"o\-dinger equation in a semi-infinite strip.
{\sl Comput. Maths. Math. Phys.} {\bf 51} (3) (2011) 355-376.
\end{thebibliography}
\end{document}